\numberwithin{equation}{section}
\theoremstyle{plain}
\newtheorem{lemma}{Lemma}[section]
\newtheorem{prop}[lemma]{Proposition}
\newtheorem{thm}[lemma]{Theorem}
\newtheorem{cor}[lemma]{Corollary}
\newtheorem{intthm}{Theorem}
\newtheorem{prdef}[lemma]{Proposition-Definition}
\newtheorem{tdef}[lemma]{Theorem-Definition}
\theoremstyle{definition}
\newtheorem{rem}[lemma]{Remark}
\newtheorem{defi}[lemma]{Definition}
\newtheorem{exa}[lemma]{Example}
\newtheorem{problem}{Problem}
\newcommand{\bde}{\begin{defi}}
\newcommand{\ede}{\end{defi}\vspace{1mm}}
\newcommand{\ble}{\begin{lemma}}
\newcommand{\ele}{\end{lemma}}
\newcommand{\bpr}{\begin{prop}}
\newcommand{\epr}{\end{prop}}
\newcommand{\bt}{\begin{thm}}
\newcommand{\et}{\end{thm}}
\newcommand{\bco}{\begin{cor}}
\newcommand{\eco}{\end{cor}}
\newcommand{\bre}{\begin{rem}}
\newcommand{\ere}{\end{rem}}
\newcommand{\bex}{\begin{exa}}
\newcommand{\eex}{\end{exa}}
\newcommand{\bpf}{\begin{proof}}
\newcommand{\epf}{\end{proof}}
\newcommand{\mcA}{\mathcal{A}}
\newcommand{\mcB}{\mathcal{B}}
\newcommand{\mcC}{\mathcal{C}}
\newcommand{\mcD}{\mathcal{D}}
\newcommand{\mcE}{\mathcal{E}}
\newcommand{\mcF}{\mathcal{F}}
\newcommand{\mcG}{\mathcal{G}}
\newcommand{\mcH}{\mathcal{H}}
\newcommand{\mcK}{\mathcal{K}}
\newcommand{\mcL}{\mathcal{L}}
\newcommand{\mcM}{\mathcal{M}}
\newcommand{\mcN}{\mathcal{N}}
\newcommand{\mcO}{\mathcal{O}}
\newcommand{\mcS}{\mathcal{S}}
\newcommand{\mcT}{\mathcal{T}}
\newcommand{\mcU}{\mathcal{U}}
\newcommand{\mbC}{\mathbb{C}}
\newcommand{\mbF}{\mathbb{F}}
\newcommand{\mbG}{\mathbb{G}}
\newcommand{\mbH}{\mathbb{H}}
\newcommand{\mbP}{\mathbb{P}}
\newcommand{\mbQ}{\mathbb{Q}}
\newcommand{\mbS}{\mathbb{S}}
\newcommand{\mbV}{\mathbb{V}}
\newcommand{\mbY}{\mathbb{Y}}
\newcommand{\mbZ}{\mathbb{Z}}
\newcommand{\mfS}{\mathfrak{S}}
\newcommand{\mfc}{\mathfrak{c}}
\newcommand{\mfg}{\mathfrak{g}}
\newcommand{\mfl}{\mathfrak{l}}
\newcommand{\mfo}{\mathfrak{o}}
\newcommand{\mfp}{\mathfrak{p}}
\newcommand{\mfs}{\mathfrak{s}}
\newcommand{\msC}{\mathscr{C}}
\newcommand{\msE}{\mathscr{E}}
\newcommand{\msF}{\mathscr{F}}
\newcommand{\msL}{\mathscr{L}}
\newcommand{\msP}{\mathscr{P}}
\newcommand{\msX}{\mathscr{X}}
\newcommand{\SSP}{\vspace{3mm}}
\newcommand{\LSP}{\vspace{5mm}}
\newcommand{\mr}{\mathrm}
\newcommand{\Fus}{\rotatebox[origin=c]{180}{$\mbY$}}
\begin{document}

\title[The generic \'{e}taleness of the moduli space of dormant $\mathfrak{so}_{2\ell}$-opers]{The generic \'{e}taleness of \\ the moduli space  of dormant $\mathfrak{so}_{2\ell}$-opers}
\author{Yasuhiro Wakabayashi}
\markboth{}{}
\maketitle
\footnotetext{Y. Wakabayashi: 
Graduate School of Information Science and Technology, Osaka University, Suita, Osaka 565-0871, Japan;}
\footnotetext{e-mail: {\tt wakabayashi@ist.osaka-u.ac.jp};}
\footnotetext{2020 {\it Mathematical Subject Classification}: Primary 14H10, Secondary 14H60;}
\footnotetext{Key words: oper, moduli space, p-curvature, curve, positive characteristic, connection}
\begin{abstract}
The generic \'{e}taleness is an important property on the moduli space of dormant $\mathfrak{g}$-opers (for a simple Lie algebra $\mathfrak{g}$) in the context of enumerative geometry. In the previous study, this property has been verified under the assumption that $\mathfrak{g}$ is either $\mathfrak{sl}_\ell$, $\mathfrak{so}_{2\ell -1}$, or $\mathfrak{sp}_{2\ell}$ for any sufficiently small positive integer $\ell$. The purpose of the present paper is to prove the generic \'{e}taleness for one of the remaining cases, i.e., $\mathfrak{g} = \mathfrak{so}_{2\ell}$. As an application of this result, we obtain a factorization formula for computing the generic degree induced from pull-back along various clutching morphisms between moduli spaces of pointed stable curves.

\end{abstract}
\tableofcontents

\section{Introduction} \label{S1}
\LSP

Linear differential equations, or more generally flat connections,   {\it  in characteristic $p> 0$} (where $p$ is a prime number) have been greatly investigated  from a variety of perspectives.
For example, 
the study of such objects involving  the Grothendieck-Katz conjecture 
leads us 
 to 
understand complex  linear differential equations and the algebraicity of  their solutions 
    (cf. ~\cite{Kat2}, ~\cite{And}).
Also, characteristic-$p$
  versions of non-Abelian Hodge theory and the geometric Langlands correspondence  have been established by applying many  techniques  specific to flat connections  in positive characteristic (cf.  ~\cite{BrBe}, ~\cite{ChZh1}, ~\cite{ChZh2}, ~\cite{GLQ}, ~\cite{LSZ}, ~\cite{OgVo}, and ~\cite{She}).

A key ingredient in these works is
  the notion of {\it $p$-curvature}.
By definition, the $p$-curvature of a flat connection in characteristic $p$  measures the obstruction to the compatibility of $p$-power structures
appearing in certain associated spaces of infinitesimal symmetries.
In particular, some classes of flat connections  characterized by $p$-curvature 
 deserve   special attention, and it is natural to ask how many of them there are.

Our study primary 
concerns
 {\it dormant $\mfg$-opers} (for a simple  Lie algebra $\mfg$), which may be thought of as generalizations of  linear  homogenous ODEs with (unit principal symbol and)
   vanishing $p$-curvature (cf. ~\cite[Definitions 2.1 and  3.15]{Wak8} for the definition of a dormant $\mfg$-oper).
 Here, let us fix  a pair of nonnegative integers $(g, r)$ with $2g-2 +r > 0$,
and denote by $\overline{\mcM}_{g, r}$ the moduli stack of $r$-pointed stable curves of genus $g$ in characteristic $p$.
Then, we obtain the moduli stack
\begin{align} \label{eeRt2}
\mcO p_{\mfg, g, r}^{^\mr{Zzz...}}
\end{align}
(cf. ~\cite[Eq.\,(433)]{Wak8})
 classifying  pairs $(\msX, \msE^\spadesuit)$ consisting of a pointed stable curve   $\msX$ in $\overline{\mcM}_{g, r}$  and  a dormant $\mfg$-oper $\msE^\spadesuit$ on it.
The assignment 
$(\msX, \msE^\spadesuit)\mapsto \msX$ determines a projection
\begin{align} \label{eeRt1}
\Pi_{\mfg, g, r} :
\mcO p^{^\mr{Zzz...}}_{\mfg, g, r}\rightarrow \overline{\mcM}_{g, r},
\end{align}
 by which $\mcO p^{^\mr{Zzz...}}_{\mfg, g, r}$ may be considered as a stack over $\overline{\mcM}_{g, r}$.

This
stack 
  for $\mfg = \mfs \mfl_2$ was originally introduced and  investigated in the context of $p$-adic Teichm\"{u}ller theory (cf. ~\cite{Moc2}), in which  
 dormant $\mfs \mfl_2$-opers (or more generally, certain $\mfs \mfl_2$-opers with nilpotent $p$-curvature)
play an analogous  role to  ``nice" projective structures on Riemann surfaces such as those  arising   from uniformization.

 One central theme of our study is to find out how many dormant $\mfg$-opers there are on a fixed curve. 
It leads us to  investigate  the structure of $\mcO p^{^\mr{Zzz...}}_{\mfg, g, r}$, as well as of $\Pi_{\mfg, g, r}$. 
 For a general $\mfg$,
 it has been shown that $\mcO p^{^\mr{Zzz...}}_{\mfg, g, r}$
 is a  nonempty proper Deligne-Mumford  stack and 
 $\Pi_{\mfg, g, r}$ is finite
 (cf. ~\cite[Theorem C]{Wak8}).
(Under some restricted situations, the finiteness  was previously proved in ~\cite[Chap.\,II, Theorem 2.8]{Moc2} and  ~\cite[Corollary 6.1.6]{JP}.)
Moreover, we know the {\it generic \'{e}taleness} of $\Pi_{\mfg, g, r}$ when $\mfg$ is, e.g.,  $\mfg = \mfs \mfl_n$ with $2n < p$  (cf. ~\cite[Theorem G]{Wak8}).
In that case,
  it makes sense to speak of the generic degree $\mr{deg}(\Pi_{\mfg, g, r})$ of $\Pi_{\mfg, g, r}$,
which counts the number of dormant $\mfg$-opers on 
$\msX$ classified by a general geometric point of $\overline{\mcM}_{g, r}$.

Note that
the values $\mr{deg} (\Pi_{\mfg, g, r})$ for
 $\mfg = \mfs \mfl_2$ 
have been    explicitly  computed  as a consequence of 
establishing the remarkable correspondences
 between 
 the following three topics (with dormant $\mfs \mfl_2$-opers at the center):

\vspace{5mm}
\begin{center}
\begin{picture}(365,120)

\put(125,90){\fbox{$\begin{matrix}
\text{Enumerative geometry} \\
\text{of dormant $\mfs \mfl_2$-opers} 
\end{matrix}$}}

\put(-20,10){\fbox{$\begin{matrix}
\text{CFT with $\widehat{\mfs \mfl}_2$-symmetry} \\
\text{\& Combinatorics of} \\
\text{graphs, polytopes, etc.} 
\end{matrix}$}}

\put(265,10){\fbox{$\begin{matrix}
\text{Gromov-Witten theory} \\
\text{of Grassmann varieties}
\end{matrix}$}}


\put(120,7){\vector(-1,0){3}}
\put(259,18){\vector(1,0){3}}

\put(120, 7){\dashbox{2.0}(140, 0){}}

\put(120, 18){\dashbox{2.0}(140, 0){}}


\put(68,43){\vector(4,3){50}}
\put(115,90){\vector(-4,-3){60}}

\put(260,90){\vector(4,-3){70}}
\put(315,35){\vector(-4,3){60}}

\put(20,75){$\begin{matrix}
\text{Degeneration}
\end{matrix}$}

\put(290,75){$\begin{matrix}
\text{Lifting to char.\,$0$}
\end{matrix}$}


\end{picture}
\vspace{5mm}
\end{center}
More specifically, 
by observing the behavior of dormant opers (including the case of $\mfg = \mfs \mfl_n$) when
 the underlying curve deforms or degenerates,
 one can carry out  the following (mutually independent) discussions (a)-(c), partially based on  
  methods and perspectives in $p$-adic Teichm\"{u}ller theory:
  \begin{itemize}
 \item[(a)]
 When the underlying curve totally degenerates (in the sense of \S\,\ref{SS3rr2} or ~\cite[Definition 7.15]{Wak8}),
 dormant $\mfs \mfl_2$-opers on that curve can be described 
 by using certain combinatorial objects, i.e.,   {\it balanced $p$-edge numberings} on a trivalent graph,  in the terminology of    ~\cite[Definition 3.1]{Wak31}.
 Moreover, 
 according to a work by F. Liu and B. Osserman (cf. ~\cite{LiOs}, ~\cite{Wak3}),
such numberings correspond to  lattice points inside a rational polytope.
 It follows that
 the numbers of these objects can be expressed as a  polynomial  with respect to ``$p$" by the classical Ehrhart theory, and 
 coincides with $\mr{deg}(\Pi_{\mfs \mfl_2, g, 0})$ because 
  $\Pi_{\mfs \mfl_2, g, 0}$ is \'{e}tale at the points classifying totally degenerate curves.
  That is to say,  there exists a degree $3g-3$ polynomial $H (t)$ in $\mbQ [t]$ (independent of $p$) such that 
 \begin{align}
\mr{deg}(\Pi_{\mfs\mfl_2, g, 0}) = \sharp  \left\{\begin{matrix} \text{balanced $p$-edge numberings on} \\ \text{a trivalent graph of type  $(g, 0)$} \end{matrix} \right\} = H (p).
\end{align}

\item[(d)]
To identify this value more explicitly, 
we also use the generic \'{e}taleness of $\Pi_{\mfs \mfl_2, g, r}$;  it gives a detailed understanding about 
a factorization property of $\Pi_{\mfs \mfl_2, g, r}$'s with respect to  degeneration of the underlying curve (i.e., pull-back along various clutching morphisms between  moduli spaces of pointed stable curves).
In particular,  we can compare  the resulting decompositions of  $\mr{deg}(\Pi_{\mfs \mfl_2, g, r})$'s   and the  fusion rule of 
 the CFT (= conformal field theory) for  the affine Lie algebra $\widehat{\mfs \mfl}_2$.
As a result of this comparison, 
 the Verlinde formula for that CFT yields  the following  explicit formula computing 
 $\mr{deg}(\Pi_{\mfs \mfl_2, g, r})$'s 
 for general $(g, r)$'s:
\begin{align}
\mr{deg}(\Pi_{\mfs \mfl_2, g, r})
=
  \frac{p^{g-1}}{2^{2g-1+r}} \cdot \sum_{j =1}^{p-1} 
\frac{\left(1-(-1)^j \cdot \cos \left(\frac{j\pi}{p} \right)\right)^r}{\sin^{2(g-1+r)} \left(\frac{j \pi}{p} \right)}
\end{align}
(cf. ~\cite[Theorem A]{Wak1}, ~\cite[Theorem 7.41]{Wak8}).
  \item[(c)]
Moreover,  based on the idea of K. Joshi et al. (cf., e.g., ~\cite{JP}, ~\cite{Jo14}),
the generic \'{e}taleness of $\Pi_{\mfs \mfl_n, g, 0}$ is applied   to lift relevant moduli spaces to characteristic $0$ and then compare them with certain 
Quot schemes
  over $\mbC$.
Hence,  
(under the assumption that $p$ is sufficiently large relative to $g$ and $n$)
the following formula for computing
  $\mr{deg}(\Pi_{\mfs \mfl_n, g, 0})$'s, originally conjectured by Joshi (cf. ~\cite[Conjecture 8.1]{Jo14}),
can be induced from a computation of the Gromov-Witten invariants of Grassmann varieties   (i.e., the Vafa-Intriligator formula) via 
 a work by Holla (cf. ~\cite{Hol}):

 \begin{align}
\mr{deg}(\Pi_{\mfs \mfl_n, g, 0})  = \frac{ p^{(n-1)(g-1)-1}}{n!} \cdot  \hspace{-5mm}
 \sum_{\genfrac{.}{.}{0pt}{}{(\zeta_1, \cdots, \zeta_n) \in \mbC^{\times n} }{ \zeta_i^p=1, \ \zeta_i \neq \zeta_j (i\neq j)}}
 \frac{(\prod_{i=1}^n\zeta_i)^{(n-1)(g-1)}}{\prod_{i\neq j}(\zeta_i -\zeta_j)^{g-1}}
 \end{align}
(cf.  ~\cite[Theorem H]{Wak8}).

\end{itemize}

As suggested above,
 the 
generic \'{e}taleness
 of $\Pi_{\mfg, g, r}$ has great importance
 from the viewpoint  of enumerative geometry.
 This property has also been verified 
  for $\mfg = \mfs \mfo_{2\ell -1}$ and $\mfs \mfp_{2m}$ (cf. ~\cite[Theorem G]{Wak8}), and we expect the same for general $\mfg$'s.
The purpose of  the present paper  is  to prove the generic \'{e}taleness for one of the remaining  cases, i.e., $\mfg = \mfs \mfo_{2\ell}$.
 Our  main result  is  described as follows.

\SSP
\begin{intthm}[cf. Theorem \ref{Thm1}] \label{ThA}
 Let $\ell$ be a positive integer with $\frac{p+2}{4} > \ell > 3$.
Then,
the stack  $\mcO p_{\mfs \mfo_{2 \ell}, g, r}^{^\mr{Zzz...}}$ 
is \'{e}tale over the points of 
$\overline{\mcM}_{g, r}$ classifying totally degenerate curves.
In particular, 
  $\mcO p_{\mfs \mfo_{2 \ell}, g, r}^{^\mr{Zzz...}}$ is generically \'{e}tale over $\overline{\mcM}_{g, r}$, i.e., any irreducible component that dominates $\overline{\mcM}_{g, r}$ admits a dense open substack which  is  \'{e}tale over $\overline{\mcM}_{g, r}$.
 \end{intthm}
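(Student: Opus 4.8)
The plan is to prove the first assertion---\'{e}taleness of $\Pi_{\mfs \mfo_{2 \ell}, g, r}$ over the totally degenerate locus---and then deduce the generic \'{e}taleness formally. For the latter implication, recall that $\overline{\mcM}_{g, r}$ is irreducible and that, by the finiteness recalled above (Theorem C), $\Pi_{\mfs \mfo_{2 \ell}, g, r}$ is a finite morphism. Hence any irreducible component $Z$ of $\mcO p_{\mfs \mfo_{2 \ell}, g, r}^{^\mr{Zzz...}}$ that dominates $\overline{\mcM}_{g, r}$ in fact surjects onto it (a finite dominant morphism onto an irreducible target is surjective), so $Z$ meets the nonempty fiber over some totally degenerate curve. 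Since the locus on which $\Pi_{\mfs \mfo_{2 \ell}, g, r}$ is \'{e}tale is open in the source, it intersects $Z$ in a nonempty---hence dense---open substack. Thus everything rests on the \'{e}taleness over totally degenerate points, which I would treat by deformation theory: as $\Pi_{\mfs \mfo_{2 \ell}, g, r}$ is already finite, it suffices to show that it induces an isomorphism on completed local rings at such a point $(\msX_0, \msE^\spadesuit)$, equivalently that every first-order (and then formal) deformation of $\msX_0$ lifts uniquely and without obstruction to a deformation of the pair.

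These relative deformations and obstructions are governed by the relative de Rham hypercohomology of the adjoint bundle of $\msE^\spadesuit$ equipped with the oper connection. The first step is to exploit that a totally degenerate $\msX_0$ is glued from $3$-pointed copies of $\mbP^1$ along its nodes: a Mayer--Vietoris / clutching argument localizes this hypercohomology into one contribution per trinion, together with a compatibility condition (matching of radii) at each node. This localization is exactly the mechanism behind the factorization formula announced in the abstract, which one reads off by pulling back along the clutching morphisms between the various $\overline{\mcM}_{g, r}$.

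The heart of the matter, and where I expect the main obstacle, is the local analysis on a single $3$-pointed $\mbP^1$. Passing through the standard $2\ell$-dimensional representation turns an $\mfs \mfo_{2 \ell}$-oper into an $\mfs \mfl_{2 \ell}$-oper that is self-dual with respect to the orthogonal pairing, with dormancy and radii (the exponents at the marked points) inherited; this both imports the $\mfs \mfl$-theory underlying Theorem G and explains the numerical hypothesis. Indeed $\frac{p+2}{4} > \ell$ is equivalent to $p > 2 \cdot (2\ell) - 2$, which forces the exponents of the standard representation to occupy an interval shorter than $p$ and hence to fall into pairwise distinct residues modulo $p$, precluding resonance; this is the orthogonal refinement of the bound $2n < p$ used for $\mfs \mfl_n$. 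Using it, I would show that the local moduli of dormant $\mfs \mfo_{2 \ell}$-opers with prescribed radii is a single reduced point and that the associated radius map is \'{e}tale, equivalently that the Kodaira--Spencer map sending a smoothing of a node to the variation of the radius is an isomorphism. The genuinely type-$D$ difficulty is that the standard representation is not minuscule and that, unlike in types $A$, $B$, $C$, there is an extra exponent $\ell - 1$ arising from the forked Dynkin diagram (reflecting the two half-spin directions); controlling its contribution comes down to checking that a certain pairing built from the orthogonal form is nonzero modulo $p$, and it is precisely here that both $\frac{p+2}{4} > \ell$ and $\ell > 3$---the latter discarding the exceptional isomorphism $\mfs \mfo_6 \cong \mfs \mfl_4$ and guaranteeing the needed inequalities among exponents---are genuinely used.

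Finally I would assemble these local inputs. Reducedness at each trinion together with \'{e}taleness of each radius map makes the node-matching conditions transverse, so the localized relative hypercohomology vanishes and $\Pi_{\mfs \mfo_{2 \ell}, g, r}$ is unramified, hence \'{e}tale, at $(\msX_0, \msE^\spadesuit)$. Combined with the formal reduction of the first paragraph, this proves the theorem; and bookkeeping the numerical contribution of each trinion and each node along the way simultaneously yields the promised factorization formula for the generic degree.
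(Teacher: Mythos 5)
Your outer scaffolding matches the paper: the ``in particular'' clause is deduced exactly as you say (irreducibility of $\overline{\mcM}_{g,r}$, finiteness of the projection, openness of the \'{e}tale locus), and the reduction of the totally degenerate case to the single trinion $\msP = (\mbP, \{[0],[1],[\infty]\})$ is delegated to the gluing result \cite[Proposition 7.19]{Wak8}, as in your second paragraph. The problem is the heart of your argument, the trinion case. You propose to pass to the standard $2\ell$-dimensional representation, view the result as a self-dual $\mfs\mfl_{2\ell}$-oper, and rerun the type-$A$ exponent argument using $p > 4\ell - 2$ to separate exponents modulo $p$. This does not work in type $D$: the flag attached to an $\mfs\mfo_{2\ell}$-oper in its standard representation has a \emph{rank-two} middle graded piece $\mcF_+^{\ell-1}/\mcF_+^{\ell}$, and only the composite $\mr{KS}^{\ell}\circ$-then-$\mr{KS}^{\ell-1}$ into $\Omega^{\otimes 2}\otimes(\mcF_+^{\ell-2}/\mcF_+^{\ell-1})$ is an isomorphism (Definition \ref{Def5}), so one does not obtain a $\mr{GL}_{2\ell}$-oper at all and the exponent/resonance mechanism from Theorem G of \cite{Wak8} has nothing to bite on. This is precisely why $\mfs\mfo_{2\ell}$ was a remaining case. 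Your acknowledgement of the ``extra exponent $\ell-1$'' followed by ``a certain pairing built from the orthogonal form is nonzero modulo $p$'' is where the actual proof would have to live, and as stated it is not an argument. (Your reading of the hypotheses is also off: $\ell>3$ is not about discarding $\mfs\mfo_6\cong\mfs\mfl_4$; in the paper it enters as the numerical inequality $-\ell+1<-2$ in Proposition \ref{Prop30}, and $\frac{p+2}{4}>\ell$ is just the standing bound $p>2(2\ell-1)$ used in Lemma \ref{Lem123}.)

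What the paper actually does on the trinion is a reduction from type $D$ to type $B$ rather than to type $A$. The equivalence of Proposition \ref{Prop3} identifies a $\mr{GO}_{2\ell}^0$-oper with a $\mr{GO}_{2\ell-1}$-oper together with an extra $1$-form--valued datum $\nu$, producing a surjection $\chi:\mcO p_{2\ell,g,r}^{^\mr{Zzz...}}\to\mcO p_{2\ell-1,g,r}^{^\mr{Zzz...}}$ whose non-dormant fibers are the affine spaces $H^0(X,\Omega^{\otimes\ell})$. The target is already known to be \'{e}tale over $k$ for the trinion by \cite[Theorem G]{Wak8}, so everything reduces to unramifiedness of $\chi$, which Proposition \ref{Prop1001} converts into the vanishing of $H^0(\mbP,\Omega^{\otimes\ell}\cap\mr{Im}(\nabla))$. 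That vanishing is then proved by an explicit Birkhoff--Grothendieck computation: the exponent symmetry $a_{q,m}=p-a_{q,2\ell+1-m}$ forced by self-duality gives $\deg(\mr{Ker}(\nabla))=-3(\ell-1)$, the canonical filtration on $F^*F_*(\Omega^{\otimes(-\ell+1)})$ gives the lower bound $w_1\ge -2$, and the degree count pins down $\Omega^{\otimes\ell}\cap\mr{Im}(\nabla)\cong\mcO_{\mbP^{(1)}}(-1)^{\oplus(p-2\ell+1)}$, which has no global sections. None of this appears in your plan, and I do not see how your route through the standard representation could be completed without reinventing it.
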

\SSP

The above theorem 
makes progress toward realizing some correspondences for $\mfg = \mfs \mfo_{2\ell}$  as  displayed in the  above picture.
In fact,  by the resulting generic \'{e}taleness, we can apply  the previous study of (a kind of) fusion rings associated to 
the moduli space of dormant opers
 (cf. ~\cite[\S\,7]{Wak8}).
Let 
 $\mfc (\mbF_p)$ denote  the set of $\mbF_p$-rational points in the GIT quotient of $\mfs \mfo_{2\ell}$ by the action of  its  adjoint group.
Each element $\rho \in \mfc (\mbF_p)^{\times r} \left(= \mfc (\mbF_p) \times \cdots \times \mfc (\mbF_p) \right)$ determines  the  closed and  open substack
$\mcO p^{^\mr{Zzz...}}_{\mfs \mfo_{2\ell}, \rho, g, r}$
of $\mcO p_{\mfs \mfo_{2\ell}, g, r}^{^\mr{Zzz...}}$
classifying  dormant $\mfs \mfo_{2\ell}$-opers {\it of radii} $\rho$ (cf. ~\cite[Definition 2.32]{Wak8}).
Then,  $\mcO p^{^\mr{Zzz...}}_{\mfs \mfo_{2\ell}, g, r}$ decomposes into the disjoint union $\coprod_{\rho \in \mfc (\mbF_p)^{\times r}} \mcO p^{^\mr{Zzz...}}_{\mfs \mfo_{2\ell}, \rho, g, r}$,  which implies  a decomposition  of generic degrees $\mr{deg}(\Pi_{\mfs \mfo_{2\ell}, g, r}) = \sum_{\rho \in \mfc (\mbF_p)^{\times r}} \mr{deg}(\Pi_{\mfs \mfo_{2\ell}, \rho, g, r})$
under the assumption  in Theorem \ref{ThA}.

As a corollary of Theorem \ref{ThA}, we obtain an analogue  of the Verlinde formula
 computing the values $\mr{deg}(\Pi_{\mfs \mfo_{2\ell}, \rho, g, r})$, as follows.

\SSP
\begin{intthm}[cf. Theorem \ref{Thm15}] \label{ThC}
Let $\ell$ be a positive integer with $\frac{p+2}{4} > \ell > 3$.
Denote by $\Fus$ the pseudo-fusion ring for dormant $\mfs \mfo_{2\ell}$-opers with multiplication $\ast$ (cf. ~\cite[Definition 7.34]{Wak8}).
 Write $\mfS$ for the set of ring homomorphims $\Fus \rightarrow \mbC$ and write $\mr{Cas} := \sum_{\lambda \in \mfc (\mbF_p)} \lambda \ast \lambda \left(\in \Fus \right)$.
Then, for each $\rho := (\rho_i)_{i=1}^r \in \mfc (\mbF_p)^{\times r}$,
the following equality holds:
\begin{align}
\mr{deg}(\Pi_{\mfs \mfo_{2\ell}, \rho, g, r}) = \sum_{\chi \in \mfS} \chi (\mr{Cas})^{g-1} \cdot \prod_{i=1}^r \chi (\rho_i).
\end{align}
In particular, if $r = 0$ (which implies $g > 1$), then this equality reads
\begin{align}
\mr{deg}(\Pi_{\mfs \mfo_{2\ell}, \emptyset, g, 0}) = \sum_{\chi \in \mfS} \chi (\mr{Cas})^{g-1}.
\end{align}
 \end{intthm}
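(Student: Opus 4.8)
The plan is to obtain this formula as a formal consequence of the generic \'{e}taleness proved in Theorem~\ref{ThA}, by feeding that result into the fusion-ring formalism of \cite[\S\,7]{Wak8}. The essential geometric input is already supplied by Theorem~\ref{ThA}; what remains is the purely algebraic extraction of the closed formula, which proceeds by degenerating the underlying curve completely and then diagonalizing the resulting fusion algebra. First I would use the \'{e}taleness of $\Pi_{\mfs\mfo_{2\ell}, g, r}$ at the totally degenerate points, asserted in Theorem~\ref{ThA}, to reduce the computation of each $\mr{deg}(\Pi_{\mfs\mfo_{2\ell}, \rho, g, r})$ to a single totally degenerate curve $\msX$: since \'{e}taleness persists there, the generic degree equals the number of dormant $\mfs\mfo_{2\ell}$-opers of radii $\rho$ on $\msX$. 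The dual graph of such an $\msX$ is trivalent, with $V = 2g-2+r$ vertices (each a three-pointed genus-$0$ component) and $E = 3g-3+r$ internal edges.

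Next I would invoke the factorization of $\mcO p^{^\mr{Zzz...}}_{\mfs\mfo_{2\ell}, g, r}$ along the clutching morphisms. \'{E}taleness at totally degenerate points guarantees that, over $\msX$, the fibre breaks up as a product over the vertices of the dual graph of the corresponding three-pointed genus-$0$ fibres, summed over all radii assigned to the internal edges. In this way the three-pointed genus-$0$ degrees $N_{\lambda\mu\nu} := \mr{deg}(\Pi_{\mfs\mfo_{2\ell}, (\lambda,\mu,\nu), 0, 3})$ appear as the fully symmetric structure constants of $\Fus$, from which both the multiplication $\ast$ and the element $\mr{Cas} = \sum_{\lambda\in\mfc(\mbF_p)}\lambda\ast\lambda$ are built; gluing a handle, i.e.\ raising the genus by one, corresponds to multiplication by $\mr{Cas}$. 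This is precisely the combinatorial bookkeeping encoded by $\Fus$ in \cite[\S\,7]{Wak8}.

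Finally I would diagonalize. Since $\Fus\otimes_{\mbZ}\mbC$ is a commutative semisimple $\mbC$-algebra, it splits as a product of copies of $\mbC$ indexed by the characters $\chi\in\mfS$, and $\ast$-multiplication by any $\lambda\in\mfc(\mbF_p)$ acts on the $\chi$-factor by the scalar $\chi(\lambda)$. In this basis the combinatorial sum of the previous step collapses, all internal edge labels being forced to a single $\chi$: each of the $V$ vertices contributes a factor $\chi(\mr{Cas})^{-1}$, each of the $E$ internal edges contributes a factor $\chi(\mr{Cas})$, and each of the $r$ marked legs contributes $\chi(\rho_i)$. Since $E-V = g-1$, the total reads $\sum_{\chi\in\mfS}\chi(\mr{Cas})^{g-1}\prod_{i=1}^r\chi(\rho_i)$, which is the asserted formula; the case $r=0$ (whence $g>1$) is the empty-product specialization.

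The hard point is precisely that this bookkeeping closes up, i.e.\ that contraction along an internal edge is governed by the reciprocal of the factor appearing at a vertex. This amounts to the orthogonality relation $\sum_{\lambda\in\mfc(\mbF_p)}\chi(\lambda)\chi'(\lambda) = \delta_{\chi\chi'}\cdot\chi(\mr{Cas})$ for the character table of $\Fus$ --- equivalently the semisimplicity of $\Fus\otimes_{\mbZ}\mbC$ together with the self-duality of the pairing on radii that is implicit in the choice $\mr{Cas} = \sum_{\lambda}\lambda\ast\lambda$. For $\mfs\mfo_{2\ell}$ this rests on the properness and finiteness of $\Pi_{\mfs\mfo_{2\ell}, g, r}$ from \cite[Theorem C]{Wak8} and on the structural results of \cite[\S\,7]{Wak8}, so that essentially the only thing left to confirm is that the range $\frac{p+2}{4} > \ell > 3$ furnished by Theorem~\ref{ThA} is exactly what allows this general formalism to apply verbatim.
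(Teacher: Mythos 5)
Your proposal is correct and follows essentially the same route as the paper: the paper likewise feeds the \'{e}taleness at totally degenerate points (Theorem \ref{Thm1}) into the pseudo-fusion-ring formalism of ~\cite[\S\,7]{Wak8} and then simply cites ~\cite[Theorem 7.36, (ii)]{Wak8} for the character-sum formula, whereas you unfold that citation into the standard degeneration--factorization--diagonalization argument. The only substantive difference is that where you say ``the only thing left to confirm is that the range $\frac{p+2}{4} > \ell > 3$ allows the formalism to apply verbatim,'' the paper makes this precise by verifying conditions $(*)$ and $(**)$ of ~\cite[\S\,7.3.5]{Wak8}, namely the self-duality $\lambda = (-1)\star\lambda$ on $\mfc (\mbF_p)$ and the reducedness of $\mcO p^{^\mr{Zzz...}}_{2\ell, 0, 3}$ supplied by Corollary \ref{Cor34}.
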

\SSP

Finally, we remark that, just as in the case of $\mfs \mfl_2$ (cf. ~\cite[\S\,7.8]{Wak8}),
the details   of 
$\mcO p^{^\mr{Zzz...}}_{\mfs \mfo_{2\ell}, \rho, 0, 3}$'s would be necessary to
  explicitly identify  $\mr{deg}(\Pi_{\mfs \mfo_{2\ell}, \rho, g, r})$'s because these values  characterize the ring-theoretic structure of $\Fus$.
  This is one of the issues to be addressed in future research  on the enumerative geometry of dormant opers.

\LSP
\subsection*{Notation and Conventions} 

All schemes appearing in the present paper are assumed to be locally noetherian.
Throughout our discussion, we fix 
an integer $\ell$ with $\ell \geq 2$
 and a pair of nonnegative integers $(g, r)$ with $2g-2 +r > 0$.
 Also, fix an algebraically closed field $k$ whose 
  characteristic $\mr{char}(k)$  satisfies 
  either ``$\mr{char}(k)= 0$" or ``$\mr{char}(k) > 2(2\ell -1)$".

Given a positive integer $n$ with $n \geq 2$, we denote by $\mr{GL}_n$ (resp., $\mr{PGL}_n$) the general (resp., projective) linear group of dimension  $n$ and by $\mr{GO}_n$ the group of orthogonal similitudes of dimension $n$.
For example, 
we have
\begin{align}
\mr{GO}_{2 \ell} := \left\{h \in \mr{GL}_{2 \ell} \, \Bigg| \,  \exists \, \mr{sim}(h) \in \mbG_m \ \text{s.t.} {^t h}\cdot \begin{pmatrix} O & E_\ell \\ E_\ell & O  \end{pmatrix} \cdot h = \mr{sim}(h) \cdot \begin{pmatrix} O & E_\ell \\ E_\ell & O\end{pmatrix}\right\},
\end{align}
where  $E_\ell$ denotes the $\ell \times \ell$ identity matrix.
Note that the group $\mr{GO}_{2\ell}$
  is disconnected, and its neutral component $\mr{GO}_{2\ell}^0 \subseteq \mr{GO}_{2\ell}$ is defined by the condition 
$\mr{det}(h) = \mr{sim}(h)^\ell$.
Denote by $\mr{PGO}_{2\ell}^0$ the adjoint group of $\mr{GO}_{2\ell}^0$ and fix a Borel subgroup $B$ of $\mr{PGO}_{2\ell}^0$.

Let $S^\mr{log}$ be an fs log scheme (cf.  ~\cite{KaKa}, ~\cite{ILL}, and ~\cite{FKa} for the basic properties and definitions  concerning  log schemes).
By a {\bf log curve} over $S^\mr{log}$,
we mean a log smooth integral morphism $f^\mr{log} : U^\mr{log} \rightarrow S^\mr{log}$ between fs log schemes such that every  geometric fiber of the underlying morphism  of schemes $f : U \rightarrow S$ is either empty or a reduced $1$-dimensional scheme (cf. ~\cite[Definition 1.40]{Wak8}).

Let us take a log curve $U^\mr{log}$ over an fs log scheme $S^\mr{log}$, and denote by $\Omega$ the sheaf of logarithmic $1$-forms on $U^\mr{log}$ over $S^\mr{log}$.
An {\bf $S^\mr{log}$-connection } on an $\mcO_U$-module $\mcF$ is  an $f^{-1}(\mcO_S)$-linear morphism $\nabla : \mcF \rightarrow \Omega \otimes \mcF$ satisfying the equality  $\nabla (a v) = d a \otimes v + a \cdot \nabla  (v)$ for any pair of  local sections $(a, v) \in \mcO_U \times  \mcF$ (cf.  ~\cite[Definition 4.1]{Wak8}).
(Since $U^\mr{log}/S^\mr{log}$ is a log curve, any $S^\mr{log}$-connection is automatically flat, in the sense of ~\cite[Definition 4.3]{Wak8}.)
By a {\bf flat vector bundle} on $U^\mr{log}/S^\mr{log}$, we mean a pair $(\mcF, \nabla)$ consisting of a vector bundle (i.e., a locally free coherent sheaf) $\mcF$ on $U$ and an $S^\mr{log}$-connection on it.

Next, denote by 
$\overline{\mcM}_{g, r}$
the moduli stack classifying $r$-pointed stable curves over $k$ of genus $g$ and by
\begin{align}
\msC_{g, r} := (f_{\mr{univ}} : \mcC_{g, r} \rightarrow \overline{\mcM}_{g, r}, \{ \sigma_{\mr{univ}, i} : \overline{\mcM}_{g, r} \rightarrow \mcC_{g, r} \}_{i=1}^r)
\end{align}
the universal family of $r$-pointed stable curves over $\overline{\mcM}_{g, r}$ (cf. ~\cite[Eq.\,(132)]{Wak8}), which consists of a prestable curve 
  $f_{\mr{univ}} : \mcC_{g, r} \rightarrow \overline{\mcM}_{g, r}$ over 
  $\overline{\mcM}_{g, r}$ and a collection of mutually disjoint $r$ marked points
  $\{ \sigma_{\mr{univ}, i} : \overline{\mcM}_{g, r} \rightarrow \mcC_{g, r} \}_{i=1}^r$ of that curve.
Recall from ~\cite[Theorem 4.5]{FKa} that  $\overline{\mcM}_{g, r}$  and $\mcC_{g, r}$ admit natural log structures; we denote the resulting fs log stack by $\overline{\mcM}_{g, r}^\mr{log}$  and $\mcC_{g, r}^\mr{log}$, respectively.

If we  take an $r$-pointed stable curve $\msX := (f: X \rightarrow S, \{ \sigma_i : S \rightarrow X \}_{i=1}^r)$ of genus $g$
  over a $k$-scheme $S$ (cf. ~\cite[Definition 1.38]{Wak8}),
then both $S$ and $X$ are equipped with log structures pulled-back from $\overline{\mcM}_{g, r}^\mr{log}$ and $\mcC_{g, r}^\mr{log}$, respectively,  via the cartesian square diagram
\begin{align} \label{Ed37}
\vcenter{\xymatrix@C=46pt@R=36pt{
 X \ar[r] \ar[d]_-{f} & \mcC_{g, r}\ar[d]^-{f_{\mr{univ}}} \\
 S \ar[r] & \overline{\mcM}_{g, r}
 }}
\end{align}
determined by the classifying morphism of $\msX$; we denote the resulting log structures by $S^\mr{log}$ and $X^\mr{log}$, respectively.
These are fs log schemes, and the morphism $f : X \rightarrow S$ extends to a log curve $f^\mr{log} : X^\mr{log} \rightarrow S^\mr{log}$.
In this way, each pointed stable curve yields a log curve.

\vspace{10mm}
\section{$\mr{GO}_{2\ell}^0$-opers on log curves} \label{S1}
\LSP

In this section, we introduce $\mr{GO}_{2\ell}^0$-opers on a log curve described in terms of vector bundles, and discuss the relationship with $\mr{GO}_{2\ell-1}$-opers (cf. Proposition \ref{Prop3}).
Our argument may be thought of  as a simple  generalization of the argument  in ~\cite[(c).\,2.9]{BeDr2}.

\LSP
\subsection{$\mr{GL}_n$-opers and $\mr{GO}_{2\ell -1}$-opers} \label{SS19}

To begin with, we recall from ~\cite{Wak8} the definitions of a $\mr{GL}_n$-oper and a $\mr{GO}_{2 \ell -1}$-oper  on a fixed  log curve.

Let us fix an fs log scheme $S^\mr{log}$ over $k$ and a log curve $U^\mr{log}$ over $S^\mr{log}$.
Denote by $\Omega$ the sheaf of logarithmic $1$-forms on $U^\mr{log}$ over $S^\mr{log}$ and by $\mcT$ its dual.
Also, for each $j  \in \mbZ \sqcup \{ \infty \}$,  we  denote by
$\mcD^{< j}$
  the sheaf of  logarithmic  crystalline differential operators of order $< j$ on $U^\mr{log}/S^\mr{log}$, i.e., the sheaf ``$\mcD_{\hslash, Y^\mr{log}/T^\mr{log}}^{< j}$" defined in ~\cite[\S\,4.2.1]{Wak8} such that the pair $(Y^\mr{log}/T^\mr{log}, \hslash)$ is taken to be $(U^\mr{log}/S^\mr{log}, 1)$.

Note that the sheaf $\mcD^{< j}$ admits two different structures of $\mcO_U$-module, i.e., one as given by left multiplication, where we denote this $\mcO_U$-module by ${^L}\mcD^{< j}$, and the other given by right multiplication, where we denote this $\mcO_U$-module by ${^R}\mcD^{< j}$.
Given an $\mcO_U$-module  $\mcF$,
we equip the tensor product $\mcD^{< j} \otimes \mcF := {^R}\mcD^{< j} \otimes \mcF$ with the $\mcO_U$-module structure
given by left multiplication.

Let 
$n$ be  a positive  integer, and 
 consider  a collection of data 
\begin{align} \label{Eq1}
\msF^\heartsuit := (\mcF, \nabla, \{ \mcF^j \}_{j=0}^n),
\end{align}
where
$\mcF$ denotes a vector bundle on $U$ of rank $n$,
$\nabla$ denotes an $S^\mr{log}$-connection  on $\mcF$,
and
$\{ \mcF^j \}_{j=0}^n$ denotes an $n$-step decreasing filtration 
\begin{align} \label{Eq357}
0 = \mcF^n \subseteq  \mcF^{n- 1} \subseteq \cdots \subseteq \mcF^0=\mcF
\end{align}
on $\mcF$ consisting of subbundles such that the subquotients $\mcF^j /\mcF^{j+1}$ ($j=0, \cdots, n-1$) are   line bundles.

\SSP
\bde[cf. ~\cite{Wak8}, Definition 4.17] \label{Def2}
\begin{itemize}
\item[(i)]
We shall say that $\msF^\heartsuit$ is a {\bf $\mr{GL}_n$-oper} on $U^\mr{log}/S^\mr{log}$ if it satisfies the following two conditions:
\begin{itemize}
\item
For each $j=1, \cdots, n-1$, $\nabla (\mcF^j)$ is  contained in $\Omega \otimes \mcF^{j-1}$;
\item
For each $j=1, \cdots, n-1$,
the well-defined $\mcO_U$-linear morphism 
\begin{align} \label{Eq356}
\mr{KS}^j
 : \mcF^j/\mcF^{j+1} \xrightarrow{} \Omega \otimes (\mcF^{j-1}/\mcF^j)
\end{align}
defined by $\overline{a} \mapsto \overline{\nabla (a)}$ for any local section $a \in \mcF^j$ (where $\overline{(-)}$'s denote the images in the respective quotients) is an isomorphism.
\end{itemize}
\item[(ii)]
Let $\msF^\heartsuit$ and $\msF'^\heartsuit$  be  $\mr{GL}_n$-opers on $U^\mr{log}/S^\mr{log}$.
An {\bf isomorphism of $\mr{GL}_n$-opers} from $\msF^\heartsuit$ to $\msF'^\heartsuit$ is an isomorphism between their  underlying vector bundles $\eta_\mcF :\mcF \xrightarrow{\sim} \mcF'$ preserving both the connection and  filtration.
\end{itemize}
\ede
\SSP

\begin{rem}[cf. ~\cite{Wak8}, Remark 4.19] \label{Rem}
Let $\msF^\heartsuit := (\mcF, \nabla, \{ \mcF^j \}_j)$ be a  $\mr{GL}_n$-oper on $U^\mr{log}/S^\mr{log}$.
The isomorphisms $\mr{KS}^j$ for various $j$'s together yield a composite isomorphism
\begin{align} \label{Eq53}
\mr{KS}^{j \Rightarrow n-1} 
: \mcF^j/\mcF^{j+1} &\xrightarrow{\sim} \mcT \otimes (\mcF^{j+1}/\mcF^{j+2})\xrightarrow{\sim} \cdots \\
\cdots &\xrightarrow{\sim} \mcT^{\otimes (n-1-j)} \otimes (\mcF^{n-1}/\mcF^n) \left(= \mcT^{\otimes (n-1-j)} \otimes \mcF^{n-1}\right). \notag
\end{align}
In particular, 
we have
\begin{align} \label{Eq98}
\mr{det}(\mcF) \xrightarrow{\sim} \bigotimes_{j=0}^{n-1} \mcF^j / \mcF^{j+1} \xrightarrow{\sim} \bigotimes_{j=0}^{n-1}  (\mcT^{\otimes (n-1-j)} \otimes \mcF^{n-1}) \xrightarrow{\sim} (\mcF^{n-1})^{\otimes n} \otimes \mcT^{\otimes \frac{n (n-1)}{2}}.
\end{align}
This  implies that the isomorphism class of  the determinant $\mr{det}(\mcF)$ depends only on $\mcF^{n-1}$.
\end{rem}
\SSP

Let $\mcF$ and $\mcG$ be vector bundles on $U$; we will not distinguish an $\mcO_U$-bilinear map $\mcF \times \mcF \rightarrow \mcG$
on $\mcF$ valued in $\mcG$ with  the corresponding $\mcO_U$-linear morphism $\mcF^{\otimes 2} \rightarrow \mcG$.
Given an $\mcO_U$-bilinear map $\omega : \mcF^{\otimes 2} \rightarrow \mcG$ on $\mcF$,
we write
\begin{align}
\omega_{(-, \bullet)} : \mcF \rightarrow \mcF^\vee \otimes \mcG
\end{align} 
for the $\mcO_U$-linear morphism given by $v \mapsto \omega (v \otimes (-))$
for any local section $v \in \mcF$.
Such a bilinear map  $\omega$ is called  {\bf nondegenerate} if ($\mcG$ has rank one and) $\omega_{(-, \bullet)}$ is an isomorphism.
If $\omega$ is nondegenerate, then the determinant of $\omega_{(-, \bullet)}$
determines an isomorphism
\begin{align} \label{Eq90}
\mr{det} (\omega) :  \mr{det}(\mcF)^{\otimes 2} \xrightarrow{\sim} \mcG^{\otimes \mr{rk}(\mcF)}.
 \end{align}

Now, let us  consider a collection of data
\begin{align} \label{eeWdis}
\msF^\heartsuit_{\sphericalangle} := (\mcF, \nabla, \{ \mcF^j \}_{j=0}^{2\ell -1}, \mcN, \nabla_\mcN, \omega),
\end{align}
where $(\mcF, \nabla, \{ \mcF^j \}_{j=0}^{2\ell -1})$ forms a $\mr{GL}_{2 \ell -1}$-oper on $U^\mr{log}/S^\mr{log}$,
$(\mcN, \nabla_\mcN)$ denotes a flat line bundle on $U^\mr{log}/S^\mr{log}$ (i.e.,
a flat vector bundle such that  $\mcN$ has rank one),
and $\omega$ denotes a nondegenerate {\it symmetric} $\mcO_U$-bilinear map $\mcF^{\otimes 2} \rightarrow \mcN$ on $\mcF$ valued in $\mcN$.

\SSP
\bde[cf. ~\cite{Wak8}, Definition 5.1] \label{Def9}
\begin{itemize}
\item[(i)]
Keeping the above notation, we  say that 
$\msF^\heartsuit_{\sphericalangle}$ is a {\bf $\mr{GO}_{2 \ell -1}$-oper} on $U^\mr{log}/S^\mr{log}$ if it satisfies the following two conditions:
\begin{itemize}
\item
The $S^\mr{log}$-connection $\nabla^{\otimes 2}$ on $\mcF^{\otimes 2}$ induced naturally from $\nabla$ is compatible with $\nabla_\mcN$  via 
$\omega$;
\item
For any $j= 0, \cdots, 2\ell -1$,
the equality $\mcF^{2\ell -1-j} = (\mcF^{j})^\perp \left(:= \mr{Ker}(\omega_{(-, \bullet)} |_{\mcF^j}) \right)$ holds.
\end{itemize}
\item[(ii)]
Let $\msF^\heartsuit_{\sphericalangle}$ and $\msF'^\heartsuit_{\sphericalangle}$ be $\mr{GO}_{2 \ell -1}$-opers on $U^\mr{log}/S^\mr{log}$.
An {\bf isomorphism of $\mr{GO}_{2\ell -1}$-opers} from 
$\msF^\heartsuit_{\sphericalangle}$ to $\msF'^\heartsuit_{\sphericalangle}$
is defined as a pair
\begin{align}
(\eta_\mcF, \eta_\mcN)
\end{align}
consisting of an isomorphism  between their respective  underlying $\mr{GL}_{2 \ell -1}$-opers $\eta_\mcF : \msF^\heartsuit \xrightarrow{\sim} \msF'^\heartsuit$  and an isomorphism $\eta_\mcN : (\mcN, \nabla_\mcN) \xrightarrow{\sim} (\mcN', \nabla_{\mcN'})$ of flat line bundles satisfying $\eta_\mcN \circ \omega = \omega' \circ \eta_\mcF^{\otimes 2}$. 

\end{itemize}
\ede
\SSP

\begin{rem} \label{Eq55}
Let $\msF^\heartsuit_{\sphericalangle}$ be a $\mr{GO}_{2\ell -1}$-oper  as in \eqref{eeWdis}.
The bilinear map $\omega$ induces an isomorphism
$(\mcF^{\ell -1}/\mcF^\ell)^{\otimes 2} \xrightarrow{\sim} \mcN$.
On the other hand, it follows from the argument of Remark \ref{Eq53} that there exists an isomorphism 
$\mcF^{\ell -1}/\mcF^\ell \xrightarrow{\sim} \mcT^{\otimes (\ell -1)} \otimes \mcF^{2\ell -2}$.
By composing these isomorphisms, we obtain an isomorphism
\begin{align} \label{Eq1255}
( \mcT^{\otimes (\ell -1)} \otimes \mcF^{2\ell -2})^{\otimes 2} \xrightarrow{\sim} \mcN. 
\end{align}
By putting $\overline{\mcF}:= \mr{det}(\mcF)^\vee \otimes \mcN^{\otimes \ell}$,
we obtain a composite isomorphism
\begin{align} \label{Eq127}
\overline{\mcF}^{\otimes 2} 
&\xrightarrow{\sim} \mr{det}(\mcF)^{\otimes (-2)} \otimes \mcN^{\otimes 2\ell} \\ &\xrightarrow{\sim} 
((\mcF^{2\ell -2})^{\otimes -2 (2\ell -1)} \otimes \mcT^{\otimes -2(2\ell -1)(\ell -1)})\otimes \mcN^{\otimes 2\ell}  \notag \\
&\xrightarrow{\sim} 
\mcN^{\otimes (-2 \ell +1)} \otimes \mcN^{\otimes 2\ell}  \notag \\
&\xrightarrow{\sim} \mcN,
\end{align}
where the second and third  arrows follow from \eqref{Eq98} and  
\eqref{Eq1255}, respectively.
\end{rem}

\LSP
\subsection{$\mr{GO}_{2 \ell}^0$-opers} \label{SS550}

Next, let us consider a collection of data
\begin{align} \label{Eq38}
\msF^\heartsuit_{\sphericalangle, +} := (\mcF_+, \nabla_+, \{ \mcF^j_+\}_{j=0}^{2 \ell -1}, \mcN, \nabla_\mcN, \omega_+, \varpi),
\end{align}
where
\begin{itemize}
\item
$(\mcF_+, \nabla_+)$ is a rank $2\ell$ flat vector bundle on $U^\mr{log}/S^\mr{log}$;
\item
$\{ \mcF^j_+ \}_{j=0}^{2\ell -1}$ is a $(2 \ell -1)$-step decreasing filtration 
\begin{align}
0 = \mcF^{2 \ell -1}_+ \subseteq  \mcF^{2 \ell - 2}_+ \subseteq \cdots \subseteq \mcF^0_+=\mcF_+
\end{align}
on $\mcF_+$ consisting of subbundles such that $\mcF^j_+/\mcF^{j+1}_+$ has rank one   for $j \neq \ell -1$ and $\mcF^{\ell -1}_+/\mcF^\ell_+$ has  rank $2$;
\item
$(\mcN, \nabla_\mcN)$ is a flat line bundle on $U^\mr{log}/S^\mr{log}$;
\item
$\omega_+$ is a nondegenerate $\mcO_U$-bilinear map $\mcF^{\otimes 2}_+ \rightarrow \mcN$ on $\mcF_+$ such that  $\mcF^{2 \ell - j-1}_+ = (\mcF^j_+)^\perp$
  for every  $j=0, \cdots, 2 \ell -1$ and that $\nabla^{\otimes 2}_+$ is compatible with $\nabla_\mcN$ via $\omega_+$ (i.e., it specifies a morphism of flat vector  bundles $(\mcF_+, \nabla_+)^{\otimes 2}\rightarrow (\mcN, \nabla_\mcN)$);
\item
$\varpi$ is an isomorphism $(\mr{det}(\mcF_+), \mr{det}(\nabla_+)) \xrightarrow{\sim} (\mcN, \nabla_\mcN)^{\otimes \ell}$ satisfying the equality $\varpi^{\otimes 2} = \mr{det}(\omega_+)$ (cf. \eqref{Eq90}).
\end{itemize}

\SSP
\bde \label{Def5}
\begin{itemize}
\item[(i)]
We say that $\msF^\heartsuit_{\sphericalangle, +}$ is a {\bf $\mr{GO}^0_{2 \ell}$-oper} on $U^\mr{log}/S^\mr{log}$ if it satisfies  the following three  conditions:
\begin{itemize}
\item
For each $j = 1, \cdots, 2\ell-2$,
$\nabla_+ (\mcF^j_+)$ is contained in $\Omega \otimes \mcF_+^{j-1}$;
\item
For each $j=1, \cdots, 2\ell -2$ with $j \neq \ell -1, \ell$,
the well-defined $\mcO_U$-linear morphism
\begin{align}
\mr{KS}^j
 : \mcF^j_+ /\mcF^{j+1}_+ \xrightarrow{} \Omega \otimes (\mcF^{j-1}_+/\mcF^j_+)  
\end{align}
is an isomorphism;
\item
The composite  morphism
\begin{align} \label{Eq46}
\mcF^{\ell}_+/\mcF_+^{\ell +1} \xrightarrow{\mr{KS}^\ell} \Omega \otimes (\mcF^{\ell -1}_+/\mcF_+^{\ell}) \xrightarrow{\mr{id} \otimes \mr{KS}^{\ell -1}} \Omega^{\otimes 2} \otimes (\mcF_+^{\ell -2}/\mcF_+^{\ell -1})
\end{align}
 is an isomorphism.
\end{itemize}
\item[(ii)]
Let $\msF^\heartsuit_{\sphericalangle, +}$ and $\msF'^\heartsuit_{\sphericalangle, +}$ be $\mr{GO}_{2 \ell }^0$-opers on $U^\mr{log}/S^\mr{log}$ (as in \eqref{Eq38}).
An {\bf isomorphism of $\mr{GO}_{2\ell }^0$-opers} from 
$\msF^\heartsuit_{\sphericalangle, +}$ to $\msF'^\heartsuit_{\sphericalangle, +}$
is defined as a pair
\begin{align}
\eta := (\eta_{\mcF_+}, \eta_\mcN)
\end{align}
consisting of an isomorphism  between their respective  underlying 
vector bundles $\eta_{\mcF_+} : \mcF_+ \xrightarrow{\sim} \mcF_+$
preserving both the filtration and connection, 
  and an isomorphism  of flat line bundles $\eta_\mcN : (\mcN, \nabla_\mcN) \xrightarrow{\sim} (\mcN', \nabla_{\mcN'})$  
such that the following square diagrams are commutative:
\begin{align} \label{Eq49}
\vcenter{\xymatrix@C=46pt@R=36pt{
\mcF^{\otimes 2}_+ \ar[r]^-{\eta_{\mcF_+}^{\otimes 2}}_-{\sim} \ar[d]_-{\omega_+} & \mcF'^{\otimes 2}_+  \ar[d]^-{\omega'_+} 
\\
\mcN \ar[r]_-{\eta_\mcN}^-{\sim} & \mcN',
 }}
 \hspace{15mm}
 \vcenter{\xymatrix@C=46pt@R=36pt{
\mr{det} (\mcF_+) \ar[r]^-{\mr{det}(\eta_{\mcF_+})}_-{\sim} \ar[d]_-{\varpi}^-{\wr} & \mr{det}(\mcF'_+) \ar[d]^-{\varpi'}_-{\wr}
\\
\mcN^{\otimes \ell} \ar[r]_-{\eta_\mcN^{\otimes \ell}}^-{\sim} & \mcN'^{\otimes \ell}.
 }}
\end{align}

\end{itemize}
\ede
\SSP

\subsection{The relation between  $\mr{GO}_{2 \ell }^0$-opers and $\mr{GO}_{2 \ell -1}$-opers} \label{SS23}

Let us take a $\mr{GO}_{2 \ell}^0$-oper $\msF^{\heartsuit}_{\sphericalangle, +} := (\mcF_+, \nabla_+, \{ \mcF^j_+ \}_j, \mcN, \nabla_{\mcN}, \omega_+, \varpi)$  on $U^\mr{log}/S^\mr{log}$.
By equipping $\mcF_+$ with  a $\mcD^{< \infty}$-module structure determined by $\nabla_+$,
we obtain 
 the $\mcD^{< \infty}$-submodule $\mcF$ of $\mcF_+$ generated by the local sections of  the line subbundle $\mcF^{2 \ell -2}_+$.
If $\nabla$  denotes the $S^\mr{log}$-connection on $\mcF$ obtained by restricting $\nabla_+$, then 
the pair  $(\mcF, \nabla)$
form a rank $(2\ell -1)$ flat subbundle of $(\mcF_+, \nabla_+)$. 

We shall write  $\overline{\mcF} := \mcF_+/\mcF$ and  write  $\overline{\nabla}$ for
 the $S^\mr{log}$-connection  on 
 $\overline{\mcF}$  induced from $\nabla_+$ via the quotient $\mcF_+ \twoheadrightarrow \overline{\mcF}$.
The pair $(\overline{\mcF}, \overline{\nabla})$ specifies a flat line bundle, which fits into the following  short exact sequence of flat vector  bundles:
 \begin{align} \label{Eq51}
 0 \rightarrow (\mcF, \nabla) \rightarrow (\mcF_+, \nabla_+) \rightarrow (\overline{\mcF}, \overline{\nabla}) \rightarrow 0.
 \end{align}
 The induced isomorphism $\mr{det}(\mcF) \otimes \overline{\mcF} \xrightarrow{\sim}\mr{det}(\mcF_+)$ gives a composite isomorphism
\begin{align} \label{Eq91}
\overline{\mcF} \xrightarrow{\sim} \mr{det}(\mcF)^\vee \otimes \mr{det}(\mcF_+) \xrightarrow{\mr{id} \otimes \varpi} \mr{det}(\mcF)^\vee \otimes \mcN^{\otimes \ell},
\end{align}
by which we often identify $\overline{\mcF}$ with $\mr{det}(\mcF)^\vee \otimes \mcN^{\otimes \ell}$.

Since the composite
\begin{align}
\mcF \xrightarrow{\mr{inclusion}} \mcF_+\xrightarrow{(\omega_+)_{(-, \bullet)}} \mcF^\vee_+ \otimes \mcN \xrightarrow{\mr{quotient}} \mcF^\vee \otimes \mcN 
\end{align}
is an isomorphism,  it
 determines a decomposition
\begin{align} \label{Eq52}
\mcF_+ = \mcF \oplus \overline{\mcF}.
\end{align}
Also,   $\omega_+$ induces, via this decomposition,  nondegenerate  bilinear maps
\begin{align}
\omega: (\mcF, \nabla)^{\otimes 2} \rightarrow (\mcN, \nabla_\mcN)
\hspace{3mm} \text{and} \hspace{3mm}
\overline{\omega} : (\overline{\mcF}, \overline{\nabla})^{\otimes 2} \xrightarrow{\sim} (\mcN, \nabla_\mcN)
\end{align}
 on $\mcF$ and $\overline{\mcF}$, respectively.
 The underlying morphism between  line bundles of $\overline{\omega}$ coincides with \eqref{Eq127}.
 By putting 
  $\mcF^j := \mcF\cap \mcF^j_+$ ($j=0, \cdots, 2 \ell -1$),
  we obtain a collection of data
\begin{align} \label{Eq110}
\msF^{\heartsuit}_{\sphericalangle, + \Rightarrow \emptyset} := (\mcF, \nabla, \{ \mcF^j \}_{j=0}^{2\ell -1}, \mcN, \nabla_\mcN, \omega),
\end{align}
which forms a $\mr{GO}_{2\ell -1}$-oper on $U^\mr{log}/S^\mr{log}$.

The short exact sequence \eqref{Eq51}  implies that,
under the identification $\mcF_+ = \mcF \oplus \overline{\mcF}$ given by  \eqref{Eq52},  
$\nabla_+$ may be expressed   as the  sum $(\nabla \oplus  \overline{\nabla}) + \nu (\msF^{\heartsuit}_{\sphericalangle, +}) : \mcF \oplus \overline{\mcF} \rightarrow \Omega \otimes (\mcF \oplus \overline{\mcF})$ for a unique  $\mcO_U$-linear morphism 
\begin{align} \label{Eq60}
\nu (\msF^{\heartsuit}_{\sphericalangle, +}) : 
\overline{\mcF} \rightarrow \Omega \otimes \mcF.
\end{align}
In this way,  each $\mr{GO}_{2\ell}$-oper  $\msF^{\heartsuit}_{\sphericalangle, +}$ determines 
a pair of data
\begin{align} \label{Eq124}
(\msF^{\heartsuit}_{\sphericalangle, + \Rightarrow \emptyset}, \nu (\msF^{\heartsuit}_{\sphericalangle, +})).
\end{align}

Next, let
$\msF^{\heartsuit}_{\sphericalangle +}$ and $\msF'^{\heartsuit}_{\sphericalangle, +}$ 
 be $\mr{GO}_{2\ell}^0$-opers on $U^\mr{log}/S^\mr{log}$ as in \eqref{Eq38}  and 
 $\eta := (\eta_{\mcF_+}, \eta_\mcN) : \msF^{\heartsuit}_{\sphericalangle, +} \xrightarrow{\sim} \msF'^{\heartsuit}_{\sphericalangle, +}$ an isomorphism of $\mr{GO}_{2\ell}^0$-opers.
Then,
 $\eta_{\mcF_+}$
 restricts to an isomorphism $\eta_\mcF : \mcF \xrightarrow{\sim} \mcF'$, forming an isomorphism of $\mr{GO}_{2 \ell -1}$-opers $\msF^{\heartsuit}_{\sphericalangle, +\Rightarrow \emptyset} \xrightarrow{\sim} \msF'^{\heartsuit}_{\sphericalangle, +\Rightarrow \emptyset}$.
 Also, if 
   $\eta_{\overline{\mcF}}: (\overline{\mcF}, \overline{\nabla}) \xrightarrow{\sim} (\overline{\mcF}', \overline{\nabla}')$  denotes the isomorphism of flat line bundles induced from
    $\eta_{\mcF_+}$ via taking quotients, 
    then
   it satisfies the equality 
   $\eta_{\overline{\mcF}} = \mr{det}(\eta_\mcF)^\vee \otimes \eta_\mcN^{\otimes \ell}$ via \eqref{Eq91}  
  and
   fits into the following isomorphism of short exact sequences
\begin{align} \label{Eq49}
\vcenter{\xymatrix@C=46pt@R=36pt{
0 \ar[r] &  (\mcF, \nabla) \ar[r]^-{\mr{inclusion}} \ar[d]_{\wr}^-{\eta_{\mcF}} & (\mcF_+, \nabla_+) \ar[r]^-{\mr{quotient}} \ar[d]_{\wr}^-{\eta_{\mcF_+}} & (\overline{\mcF},  \overline{\nabla}) \ar[r] \ar[d]_{\wr}^{\eta_{\overline{\mcF}}} & 0 \\
0 \ar[r] &  (\mcF', \nabla') \ar[r]_-{\mr{inclusion}} & (\mcF'_+, \nabla'_+) \ar[r]_-{\mr{quotient}} & (\overline{\mcF}', \overline{\nabla}') \ar[r] & 0. 
 }}
\end{align}
Under the identifications $\mcF_+ = \mcF \oplus \overline{\mcF}$, $\mcF'_+ = \mcF' \oplus \overline{\mcF}'$ given by \eqref{Eq52},
the isomorphism $\eta_{\mcF_+}$ may be expressed  as the sum $(\eta_\mcF \oplus  \eta_{\overline{\mcF}}) + \nu (\eta) : \mcF \oplus \overline{\mcF} \xrightarrow{\sim} \mcF' \oplus \overline{\mcF}'$ for some $\mcO_U$-linear morphism
 \begin{align}
\eta_\nu : \overline{\mcF} \rightarrow \mcF'.
 \end{align}
Thus,  $\eta$ determines  a triple  of data
\begin{align} \label{Eq123}
(\eta_\mcF, \eta_\mcN, \eta_\nu).
\end{align}

\SSP
\ble \label{Lem3}
Let us keep the above notation.
Then,
the following equality of morphisms  $\overline{\mcF} \rightarrow \Omega \otimes \mcF'$ holds:
\begin{align}
\eta_\mcF \circ \nu (\msF^{\heartsuit}_{\sphericalangle, +}) -\nu (\msF'^{\heartsuit}_{\sphericalangle, +}) \circ \eta_{\overline{\mcF}}  = (\overline{\nabla}^\vee \otimes \nabla')(\eta_\nu), 
\end{align}
where $\overline{\nabla}^\vee \otimes \nabla'$ denotes the $S^\mr{log}$-connection on $\mcH om_{\mcO_X} (\overline{\mcF}, \mcF') \left(=  \overline{\mcF}^\vee \otimes \mcF'\right)$ induced naturally from  $\nabla'$ and (the dual of) $\overline{\nabla}$.
\ele
\begin{proof}
Since the problem is of local nature, we may assume
 that $\mcT= \mcO_U \partial$ for some section $\partial \in \Gamma (U, \mcT)$ (viewed  as a derivation on $\mcO_U$) and 
various    morphisms involved  are described 
 as
\begin{align}
\nabla = \partial +  A, 
\hspace{10mm}
\nabla' = \partial +  A',
\hspace{10mm}
 \overline{\nabla} = \partial + a, 
 \hspace{10mm}
 \overline{\nabla}'= \partial + a'
\end{align}
for some $A \in \mr{End}_{\mcO_U}(\mcF)$, $A' \in \mr{End}_{\mcO_U}(\mcF')$, $a \in \mr{End}_{\mcO_U} (\overline{\mcF})$, and  $a' \in \mr{End}_{\mcO_U} (\overline{\mcF}')$.
Since $\eta_{\mcF_+}$ preserves the connection, 
 we obtain an equality 
\begin{align}
\left(\partial +  \begin{pmatrix}  A' & \nu (\msF'^{\heartsuit}_{\sphericalangle, +})    \\
0  & a' \end{pmatrix}\right)  \circ \begin{pmatrix} \eta_\mcF  & \eta_\nu   \\ 0 & \eta_{\overline{\mcF}}\end{pmatrix}
=  \begin{pmatrix} \eta_\mcF  & \eta_\nu  \\ 0 & \eta_{\overline{\mcF}}\end{pmatrix} \circ \left(\partial +  \begin{pmatrix}  A & \nu (\msF^{\heartsuit}_{\sphericalangle, +})    \\
0  & a\end{pmatrix} \right)
\end{align}
of morphisms $\mcF \oplus \overline{\mcF} \rightarrow \Omega \otimes (\mcF' \oplus \overline{\mcF}')$. 
The $(1, 2)$-component  of this equality reads
\begin{align}
\bcancel{\eta_\nu \circ \partial }+ \partial (\nu ( \eta)) + A' \circ \eta_\nu + \nu (\msF'^{\heartsuit}_{\sphericalangle, +}) \circ \eta_{\overline{\mcF}}  
=\bcancel{ \eta_\nu \circ  \partial} + \eta_\mcF \circ  \nu (\msF^{\heartsuit}_{\sphericalangle, +} )  + \eta_\nu\circ a.
\end{align}
This is nothing but  the desired equality.
\end{proof}
\SSP

We shall denote by
\begin{align} \label{Eq941}
\mcO p_{2\ell} (U^\mr{log})
\end{align}
the groupoids consisting of  $\mr{GO}_{2 \ell}^0$-opers on $U^\mr{log}/S^\mr{log}$ and isomorphisms between them.
Also, we denote by 
\begin{align} \label{Eq100}
\mcO p_{2 \ell -1}^+ (U^\mr{log})
\end{align}
 the groupoid defined as follows:
\begin{itemize}
\item
 The objects are pairs  $(\msF^{\heartsuit}_\sphericalangle, \nu)$, where
\begin{itemize}
\item
$\msF^{\heartsuit}_\sphericalangle := (\mcF, \nabla, \{ \mcF^j \}_j, \mcN, \nabla_\mcN, \omega)$ denotes a $\mr{GO}_{2 \ell -1}$-oper on $U^\mr{log}/S^\mr{log}$;
\item
$\nu$ denotes an $\mcO_U$-linear morphism $\overline{\mcF} \rightarrow \Omega \otimes \mcF$
(where $\overline{\mcF} := \mr{det}(\mcF)^\vee \otimes \mcN^{\otimes \ell}$).
\end{itemize}
\item
The morphisms from $(\msF^{\heartsuit}_\sphericalangle, \nu)$ to $(\msF'^{\heartsuit}_\sphericalangle, \nu')$ are collections
$(\eta_\mcF, \eta_\mcN, \eta_\nu)$, where
\begin{itemize}
\item
$(\eta_\mcF, \eta_\mcN)$ is 
an isomorphism of $\mr{GO}_{2 \ell -1}$-opers  $\msF^{\heartsuit}_\sphericalangle \xrightarrow{\sim} \msF'^{\heartsuit}_\sphericalangle$;
\item
$\eta_\nu$ denotes 
an $\mcO_U$-linear morphism $\overline{\mcF} \rightarrow \mcF'$ satisfying the equality
$\eta_\mcF \circ \nu - \nu' \circ \eta_{\overline{\mcF}} = (\overline{\nabla}^\vee \otimes \nabla') (\eta_\nu)$, where $(\overline{\mcF}, \overline{\nabla})$ and $(\mcF', \nabla')$ are flat vector bundles determined (in the above manner) by $\msF^{\heartsuit}_\sphericalangle$ and $\msF'^{\heartsuit}_\sphericalangle$, respectively, and $\eta_{\overline{\mcF}} := \mr{det}(\eta_\mcF)^\vee \otimes \eta_\mcN^{\otimes \ell}$.
\end{itemize} 
\end{itemize}

\SSP
\bpr \label{Prop3}
The assignments
$\msF^{\heartsuit}_{\sphericalangle, +} \mapsto (\msF^{\heartsuit}_{\sphericalangle, + \Rightarrow \emptyset}, \nu (\msF^{\heartsuit}_{\sphericalangle, +}))$
and $\eta \mapsto (\eta_\mcF, \eta_\mcN, \eta_\nu)$ constructed in \eqref{Eq124} and \eqref{Eq123}, respectively, 
determines 
   an equivalence of categories
\begin{align} \label{Eq68}
\mcO p_{2 \ell} (U^\mr{log}) \xrightarrow{\sim} \mcO p_{2 \ell -1}^+ (U^\mr{log}).
\end{align}
Moreover, the formation of this equivalence commutes with pull-back by any \'{e}tale $U$-scheme (equipped with the natural log structure pulled-back from that on $U^\mr{log}$, which gives a structure of log curve over $S^\mr{log}$), as well as with base-change to any fs log scheme over  $S^\mr{log}$.
\epr
\begin{proof}
Let us take an object $(\msF^\heartsuit_{\sphericalangle}, \nu)$ of $\mcO p_{2 \ell -1}^+ (U^\mr{log})$, where $\msF^\heartsuit_{\sphericalangle} := (\mcF, \nabla, \{ \mcF^j \}_{j=0}^{2\ell -1}, \mcN, \nabla_\mcN, \omega)$.
We shall set $\overline{\mcF} := \mr{det}(\mcF)^\vee \otimes \mcN^{\otimes \ell}$,   $\mcF_+ := \mcF \oplus \overline{\mcF}$, and set $\mcF_+^j := \mcF^j \oplus \overline{\mcF}$ (resp., $\mcF_{+}^j := \mcF^j$) if $j = 0, \cdots, \ell -1$ (resp., $j = \ell, \cdots, 2\ell -1$).
Also, set $\nabla_+ := (\nabla \oplus d) + \nu$, which specifies an $S^\mr{log}$-connection on $\mcF_+$.
There exists a unique bilinear map  $\omega_+ : \mcF_+^{\otimes 2}\rightarrow \mcN$ such that
 the decomposition $\mcF_+ =\mcF \oplus \overline{\mcF}$ is orthogonal and its restriction to $\mcF$ (resp., $\overline{\mcF}$) coincides with $\omega$ (resp., \eqref{Eq127}).
Observe that the composite isomorphism
 \begin{align}
\varpi :  \mr{det}(\mcF_+) \xrightarrow{\sim} \mr{det} (\mcF) \otimes \overline{\mcF} 
 \xrightarrow{\sim} \mcN^{\otimes \ell}
 \end{align}
 induced by $\overline{\mcF} = \mr{det}(\mcF)^\vee \otimes \mcN^{\otimes \ell}$  is  compatible with the $S^\mr{log}$-connections $\mr{det}(\nabla_+)$ and $\nabla_\mcN^{\otimes \ell}$. 
 Thus, the  resulting collection
 \begin{align}
 \msF^\heartsuit_{\sphericalangle, +}:= (\mcF_+, \nabla_+, \{ \mcF_+^j \}_j, \mcN, \nabla_\mcN, \omega_+, \varpi)
 \end{align}
forms a $\mr{GO}_{2\ell}^0$-oper.
The assignment  $(\msF^\heartsuit_{\sphericalangle}, \nu) \mapsto \msF^\heartsuit_{\sphericalangle, +}$
 turns out to define an inverse of the assignment
$\msF^{\heartsuit}_{\sphericalangle, +} \mapsto (\msF^{\heartsuit}_{\sphericalangle, + \Rightarrow \emptyset}, \nu (\msF^{\heartsuit}_{\sphericalangle, +}))$, so
 we obtain the desired equivalence of categories \eqref{Eq68}.
\end{proof}

\vspace{10mm}
\section{$(\mr{GO}_{2\ell}^0, \vartheta)$-opers on log curves} \label{S154}
\LSP

This section discusses $\mr{GO}_{2\ell}^0$-opers whose determinants are fixed by using  a kind of generalized theta characteristic (i.e., a $(2\ell-1)$-theta characteristic).
We will give a bijective correspondence with $\mfs \mfo_{2\ell}$-opers (cf. Proposition \ref{Prop28}), via  which these objects can be described    in terms of  $\mfs \mfo_{2\ell -1}$-oper (cf. Theorem-Definition  \ref{Cor4}).

Let 
keep the notation introduced at the beginning of \S\,\ref{SS19}.

\LSP
\subsection{$(\mr{GO}_{2 \ell}^0, \vartheta)$-opers} \label{SS10}

Let $\vartheta := (\varTheta, \nabla_\vartheta)$ be a $(2\ell -1)$-theta  characteristic of $U^\mr{log}/S^\mr{log}$ in the sense of 
~\cite[Definition 4.31]{Wak8}, i.e., 
a pair consisting of a line bundle $\varTheta$ on $U$ and an $S^\mr{log}$-connection $\nabla_\vartheta$ on the line bundle $\mcT^{(2\ell -1)(\ell -1)} \otimes \varTheta^{\otimes (2 \ell -1)}$.
In the subsequent discussion, we abuse notation by writing $\vartheta$ for its pull-backs by \'{e}tale $U$-schemes,  as well as its base-changes to fs log schemes over $S^\mr{log}$ (cf. ~\cite[\S\,4.6.2]{Wak8}).

We shall write $\overline{\mcF}_\varTheta := \mcT^{\otimes (\ell -1)} \otimes \varTheta$, $\mcN_\varTheta :=   (\mcT^{\otimes (\ell -1)} \otimes \varTheta)^{\otimes 2}$, $\mcF_\varTheta := \mcD^{< (2 \ell -1)} \otimes \varTheta$, and 
\begin{align}
\mcF^j_\varTheta := \mcD^{< (2\ell -j-1)} \otimes \varTheta \  \ (j=0, \cdots, 2 \ell -1).
\end{align}
Note that $\{ \mcF_\varTheta^j \}_{j=0}^{2\ell -1}$ forms a $(2 \ell -1)$-step decreasing filtration on the rank $(2\ell -1)$ vector bundle $\mcF_\varTheta$, and 
the  subquotient $\mcF_\varTheta^j / \mcF_\varTheta^{j+1}$ (for each $j= 0, \cdots, 2\ell -2$) is naturally isomorphic to $\mcT^{\otimes (2 \ell -j -2)} \otimes \varTheta$.
Hence,
we obtain the composite of canonical isomorphisms
\begin{align} \label{Eq131}
\mr{det}(\mcF_\varTheta)  \xrightarrow{\sim} \bigotimes_{j=0}^{2\ell -2} (\mcF_\varTheta^j/\mcF_\varTheta^{j+1})
\xrightarrow{\sim}
\bigotimes_{j=0}^{2\ell -2} (\mcT^{\otimes (2\ell - j -2)}\otimes \varTheta)
\xrightarrow{\sim} \mcT^{\otimes (2\ell -1)(\ell -1)} \otimes \varTheta^{\otimes (2\ell -1)},
\end{align}
which induces 
\begin{align} \label{Eq130}
\overline{\mcF}_\varTheta
\xrightarrow{\sim}
(\mcT^{\otimes (2\ell -1)(\ell -1)} \otimes \varTheta^{\otimes (2\ell -1)})^\vee \otimes  (\mcT^{\otimes (\ell -1)} \otimes \varTheta)^{\otimes 2\ell}
\xrightarrow{\sim}\mr{det}(\mcF_\varTheta)^\vee \otimes \mcN^{\otimes \ell}_\varTheta.
\end{align}

Next, we shall set
$\mcF_{\varTheta, +} := \mcF_\varTheta \oplus \overline{\mcF}_\varTheta$
and 
\begin{align}
\mcF_{\varTheta, +}^j := \begin{cases} \mcF^j_\varTheta \oplus \overline{\mcF}_\varTheta& \text{if $0 \leq j\leq \ell -1$}   \\ \mcF^j_\varTheta & \text{if $\ell \leq j \leq 2 \ell -1$}.\end{cases}
\end{align}
It follows from ~\cite[Proposition 4.22, (i)]{Wak8} that there exists a unique pair of 
$S^\mr{log}$-connections
\begin{align}
\nabla_{\mcN_\varTheta} : \mcN_\varTheta \rightarrow \Omega\otimes \mcN_\varTheta, \hspace{5mm}
\nabla_{\overline{\mcF}_\varTheta} : \overline{\mcF}_\varTheta \rightarrow \Omega \otimes \overline{\mcF}_\varTheta
\end{align}
such that
  $\nabla_{\mcN_\varTheta}^{\otimes (2\ell -1)} = \nabla_\vartheta^{\otimes 2}$ 
  and $\nabla_{\overline{\mcF}_\varTheta}^{\otimes 2} = \nabla_{\mcN_\varTheta}$ 
  under  natural identifications $\mcN_{\varTheta}^{\otimes (2\ell -1)} = (\mcT^{\otimes \frac{n(n-1)}{2}} \otimes \varTheta^{\otimes n})^{\otimes 2}$ and $\overline{\mcF}_\varTheta^{\otimes 2} = \mcN_\varTheta$, respectively.
Also,  we obtain 
\begin{align}
\varpi_\varTheta : \mr{det}(\mcF_{\varTheta, +}) \xrightarrow{\sim} \mr{det}(\mcF_\varTheta) \otimes \overline{\mcF}_\varTheta \xrightarrow{\sim}
(\mcT^{\otimes (2\ell -1)(\ell -1)} \otimes \varTheta^{\otimes (2\ell -1)}) \otimes (\mcT^{\otimes (\ell -1)} \otimes \varTheta) \xrightarrow{\sim} \mcN_\varTheta^{\otimes \ell}.
\end{align}
The non-resp'd portion of the following definition was  already discussed in ~\cite[Definitions 4.36 and 5.4]{Wak8}.

\SSP
\bde \label{Def7}
\begin{itemize}
\item[(i)]
By a {\bf $(\mr{GO}_{2\ell -1}, \vartheta)$-oper} (resp., a {\bf $(\mr{GO}_{2\ell}^0, \vartheta)$-oper}) on $U^\mr{log}/S^\mr{log}$, we mean a pair
\begin{align} \label{Eq220}
\nabla^\diamondsuit_\sphericalangle := (\nabla^\diamondsuit, \omega) \left(\text{resp.,} \  \nabla^\diamondsuit_{\sphericalangle, +} := (\nabla^\diamondsuit_+, \omega_+)  \right)
\end{align}
consisting of an $S^\mr{log}$-connection  $\nabla^\diamondsuit$ on $\mcF_\varTheta$ (resp., $\nabla_+^\diamondsuit$ on  $\mcF_{\varTheta, +}$ with $\nabla^\diamondsuit_+ (\mcF_\varTheta) \subseteq \Omega \otimes \mcF_\varTheta$) and a nondegenerate symmetric $\mcO_U$-bilinear map
$\omega : \mcF_{\varTheta}^{\otimes 2} \rightarrow \mcN_\vartheta$ (resp.,
 $\omega_+ : \mcF_{\varTheta, +}^{\otimes 2} \rightarrow \mcN_\vartheta$)
   such that the collection
\begin{align} \label{Eq221}
\nabla_\sphericalangle^{\diamondsuit \Rightarrow \heartsuit} := (\mcF_{\varTheta}, \nabla^\diamondsuit, \{ \mcF_{\varTheta}^j \}_{j=0}^{2 \ell-1}, \mcN_\varTheta, \nabla_{\mcN_\varTheta}, \omega) \hspace{17mm}\\
\left(\text{resp.,}\nabla_{\sphericalangle, +}^{\diamondsuit \Rightarrow \heartsuit} := (\mcF_{\varTheta, +}, \nabla_+^\diamondsuit, \{ \mcF_{\varTheta, +}^j \}_{j=0}^{2 \ell-1}, \mcN_\varTheta, \nabla_{\mcN_\varTheta}, \omega_+, \varpi_\varTheta) \right) \notag
\end{align}
forms a $\mr{GO}_{2 \ell -1}$-oper (resp., a $\mr{GO}_{2 \ell}^0$-oper) on $U^\mr{log}/S^\mr{log}$.
If $U^\mr{log}/S^\mr{log} = X^\mr{log}/S^\mr{log}$ for some pointed stable curve $\msX := (X/S, \{ \sigma_i \}_i)$,
then any $(\mr{GO}_{2\ell -1}, \vartheta)$-oper (resp., $(\mr{GO}_{2\ell}^0, \vartheta)$-oper) on that log curve will be referred to as a {\bf $(\mr{GO}_{2\ell -1}, \vartheta)$-oper on $\msX$} (resp., a {\bf $(\mr{GO}_{2\ell}^0, \vartheta)$-oper on $\msX$}).
\item[(ii)]
Let $\nabla^\diamondsuit_\sphericalangle$ and  $\nabla'^\diamondsuit_\sphericalangle$
(resp., $\nabla^\diamondsuit_{\sphericalangle, +}$ and  $\nabla'^\diamondsuit_{\sphericalangle, +}$)
 be  $(\mr{GO}_{2 \ell-1}, \vartheta)$-opers   (resp., $(\mr{GO}_{2 \ell}^0, \vartheta)$-opers) on $U^\mr{log}/S^\mr{log}$.
An {\bf isomorphism of   $(\mr{GO}_{2 \ell-1}, \vartheta)$-opers} (resp., {\bf $(\mr{GO}_{2 \ell}^0, \vartheta)$-opers}) from  $\nabla^\diamondsuit_\sphericalangle$ to   $\nabla'^\diamondsuit_\sphericalangle$
(resp., from   $\nabla^\diamondsuit_{\sphericalangle, +}$ to   $\nabla'^\diamondsuit_{\sphericalangle, +}$)
 is defined as an isomorphism   of  $\mr{GO}_{2 \ell-1}$-opers $\nabla_\sphericalangle^{\diamondsuit \Rightarrow \heartsuit} \xrightarrow{\sim} \nabla_\sphericalangle'^{\diamondsuit \Rightarrow \heartsuit}$ (resp., an isomorphism of $\mr{GO}_{2\ell}^0$-opers $\nabla_{\sphericalangle, +}^{\diamondsuit \Rightarrow \heartsuit} \xrightarrow{\sim} \nabla_{\sphericalangle, +}'^{\diamondsuit \Rightarrow \heartsuit}$).
 \end{itemize}
\ede
\SSP

\bpr \label{Rem9}
Each $\mr{GO}_{2\ell}^0$-oper is isomorphic to (the $\mr{GO}_{2\ell}^0$-oper induced by) a $(\mr{GO}_{2\ell}^0, \vartheta')$-oper for some $(2\ell  -1)$-theta characteristic $\vartheta'$.
\epr
\begin{proof}
Let us take a $\mr{GO}_{2\ell}^0$-oper  
$\msF^\heartsuit_{\sphericalangle, +} := (\mcF_+, \nabla_+, \{ \mcF^j_+ \}_{j=0}^{2\ell -1}, \mcN, \nabla_\mcN, \omega_+, \varpi)$ on $U^\mr{log}/S^\mr{log}$.
This $\mr{GO}_{2\ell}^0$-oper determines  a flat vector  bundle
  $(\mcF, \nabla)$ defined as in \S\,\ref{SS23}.
If we set   $\varTheta' := \mcF^{2\ell -2}_+$,
then the composite
\begin{align}
\mcF_{\varTheta'} \left(=\mcD^{<2\ell -1} \otimes \varTheta' \right) \hookrightarrow \mcD^{< \infty} \otimes \mcF \rightarrow \mcF
\end{align}
turns out to be  an isomorphism, 
where the first arrow arises from the natural inclusions $\mcD^{< 2\ell -1}\hookrightarrow \mcD^{< \infty}$ and $\varTheta' \hookrightarrow \mcF$, and the second arrow is the $\mcD^{< \infty}$-action on $\mcF$ corresponding to $\nabla$.
 Also, \eqref{Eq1255}, \eqref{Eq91}, and \eqref{Eq130} in our situation here   give rise to   isomorphisms $\mcN_{\varTheta'} \xrightarrow{\sim} \mcN$, 
 $\overline{\mcF}_{\varTheta'} \xrightarrow{\sim} \overline{\mcF}$.
 In particular,  we obtain  an isomorphism $\eta_{\mcF_+} : (\mcF_{\varTheta'} \oplus \overline{\mcF}_{\varTheta'} = ) \mcF_{\varTheta', +} \xrightarrow{\sim} \mcF_+ (\stackrel{\eqref{Eq52}}{=} \mcF \oplus \overline{\mcF})$.
 The $S^\mr{log}$-connection 
 $\nabla_+$ is transposed into an $S^\mr{log}$-connection $\nabla'_+$ on $\mcF_{\varTheta', +}$  via  this isomorphism.
Also, the $S^\mr{log}$-connection $\mr{det}(\nabla)$ on $\mr{det}(\mcF)$ induced from $\nabla$ corresponds to an $S^\mr{log}$-connection $\nabla_{\vartheta'}$ on $\mcT^{\otimes (2\ell -1)(\ell -1)} \otimes \varTheta'^{\otimes (2\ell -1)}$ via \eqref{Eq98}.
Thus, we obtain an $(2\ell -1)$-theta characteristic  $\vartheta' := (\varTheta', \nabla_{\vartheta'})$ of $U^\mr{log}/S^\mr{log}$.
Moreover, 
if $\omega'_+$ denotes the bilinear map $\mcF_{\varTheta'}^{\otimes 2} \rightarrow \mcN_{\varTheta'}$ corresponding to $\omega_+$ via $\eta_{\mcF_+}$,
then
the resulting pair $\nabla_{\sphericalangle, +}^\diamondsuit := (\nabla'_+, \omega'_+)$   specifies  a $(\mr{GO}_{2\ell}, \vartheta')$-oper with $\nabla_{\sphericalangle, +}^{\diamondsuit \Rightarrow \heartsuit} \cong \msF^\heartsuit_{\sphericalangle, +}$.
This completes the proof of the assertion.
\end{proof}
\SSP

\begin{rem}
[Change of $(2\ell -1)$-theta characteristics]
 \label{Rem99}
Recall  from ~\cite[\S\,4.6.5]{Wak8} that for a flat line bundle $\msL := (\mcL, \nabla_\mcL)$ on $U^\mr{log}/S^\mr{log}$, the pair 
\begin{align}
\vartheta \otimes \msL := (\varTheta \otimes \mcL, \nabla_\vartheta \otimes \nabla_\mcL^{\otimes 2\ell -1})
\end{align}
forms a $(2\ell -1)$-theta characteristic of $U^\mr{log}/S^\mr{log}$.
Conversely, if $\vartheta'$ is another $(2\ell -1)$-theta characteristic,
then there exists a flat bundle $\vartheta' /\vartheta$  such that $\vartheta \otimes (\vartheta'/\vartheta)$ is isomorphic to $\vartheta'$ (cf. ~\cite[Lemma 4.35]{Wak8}).

Now, let us take a $(\mr{GL}_{2\ell}, \vartheta)$-oper $\nabla^\diamondsuit_{\sphericalangle, +} := (\nabla^\diamondsuit_+, \omega_+)$ on $U^\mr{log}/S^\mr{log}$ and $\msL := (\mcL, \nabla_\mcL)$  a flat line bundle on $U^\mr{log}/S^\mr{log}$.
We shall denote by 
$\nabla_{+, \otimes \msL}^\diamondsuit$ 
the $S^\mr{log}$-connection on $\mcF_{\varTheta \otimes \mcL, +}$ corresponding to $\nabla_\mcL \otimes \nabla_{+}^\diamondsuit$ via the  isomorphism
$\mcF_{\varTheta \otimes \mcL, +} \xrightarrow{\sim} \mcL \otimes \mcF_{\varTheta, +}$
defined as the direct sum of $\gamma : \mcF_{\varTheta \otimes \mcL}\xrightarrow{\sim} \mcL \otimes \mcF_{\varTheta}$ constructed in ~\cite[Eq, (586)]{Wak8} and the isomorphism 
$\mcT^{\otimes (\ell -1)} \otimes \varTheta \otimes \mcL \xrightarrow{\sim} \mcL \otimes \mcT^{\otimes (\ell -1)} \otimes \varTheta$ given by $a \otimes b \otimes c \mapsto b \otimes c \otimes a$.
The tensor product of $\omega_+$ and  the identity morphism $\mr{id}_{\mcL^{\otimes 2}}$ specifies  a bilinear map $\omega_{+, \otimes \msL} : \mcF_{\varTheta \otimes \mcL}^{\otimes 2} \rightarrow \mcN \otimes \mcL^{\otimes 2}$ under the identification $\mcF_{\varTheta \otimes \mcL} = \mcL \otimes \mcF_\varTheta$ given by $\gamma$.
Then, the resulting pair 
\begin{align} \label{Eq1404}
\nabla^\diamondsuit_{\sphericalangle, +, \otimes \msL}
 := (\nabla_{\sphericalangle, \otimes \msL}^\diamondsuit, \omega_{+, \otimes \msL})
\end{align}
 forms a $(\mr{GO}_{2 \ell}, \vartheta \otimes \msL)$-oper on $U^\mr{log}/S^\mr{log}$, which may be thought of as a {\it twist} of $\nabla^\diamondsuit_{\sphericalangle, +}$ by $\msL$.
\end{rem}

\LSP
\subsection{The relation between $(\mr{GO}_{2 \ell}^0, \vartheta)$-opers and $(\mr{GO}_{2 \ell -1}, \vartheta)$-opers} \label{SS670}

Note that  $(\mr{GO}_{2\ell -1}, \vartheta)$-opers (resp., $(\mr{GO}_{2\ell}^0, \vartheta)$-opers) may be considered as  $\mr{GO}_{2\ell -1}$-opers (resp., $\mr{GO}_{2 \ell}^0$-opers) via the functor $(-)^{\Rightarrow \heartsuit}$,
so they  form a full subcategory
\begin{align} \label{Eq92}
\mcO p^{+}_{2\ell -1, \vartheta} (U^\mr{log}) \ \left(\text{resp.,} \mcO p_{2\ell, \vartheta} (U^\mr{log}) \right)
\end{align}
of $\mcO p^{+}_{2\ell -1} (U^\mr{log})$ (resp., $\mcO p_{2\ell} (U^\mr{log})$).

\SSP
\begin{prdef}\label{Prop26}
Let us keep the above notation.
Then, \eqref{Eq68} restricts to an equivalence of categories
\begin{align} \label{Eq138}
\mcO p_{2 \ell, \vartheta} (U^\mr{log}) \xrightarrow{\sim} \mcO p_{2 \ell -1, \vartheta}^+ (U^\mr{log}).
\end{align}
Moreover, the formation of this equivalence commutes with pull-back  over any \'{e}tale $U$-scheme  (in the same sense as \eqref{Eq68}), 
 as well as with base-change over any  fs log scheme over $S^\mr{log}$.

 For a $(\mr{GO}_{2\ell}^0, \vartheta)$-oper $\nabla_{\sphericalangle, +}^\diamondsuit$, the $(\mr{GO}_{2\ell -1}, \vartheta)$-oper and the
 morphism $\overline{\mcF}_\varTheta \rightarrow \mcF_\varTheta$
   associated to $\nabla_{\sphericalangle, +}^\diamondsuit$ via \eqref{Eq138} will be denoted by $\nabla_{\sphericalangle, + \Rightarrow \emptyset}^\diamondsuit$ and $\nu (\nabla_{\sphericalangle, +}^\diamondsuit )$, respectively.
\end{prdef}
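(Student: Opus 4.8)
The plan is to use that the equivalence \eqref{Eq68} of Proposition \ref{Prop3} and the explicit inverse functor constructed in its proof are already available, so that establishing \eqref{Eq138} reduces to checking that each of these two functors sends objects of the source $\vartheta$-subcategory into the target $\vartheta$-subcategory; full faithfulness of the restricted functors is then inherited from \eqref{Eq68}, and the names $\nabla^\diamondsuit_{\sphericalangle, + \Rightarrow \emptyset}$ and $\nu(\nabla^\diamondsuit_{\sphericalangle, +})$ are, by definition, the images of $\nabla^\diamondsuit_{\sphericalangle, +}$ under the restricted equivalence. The compatibility with pull-back and base-change will be deduced at the end from the corresponding property of \eqref{Eq68}.

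First I would treat the forward functor. Let $\nabla^\diamondsuit_{\sphericalangle, +} = (\nabla^\diamondsuit_+, \omega_+)$ be a $(\mr{GO}_{2\ell}^0, \vartheta)$-oper, whose underlying filtered bundle is the $\vartheta$-model $\mcF_{\varTheta, +} = \mcF_\varTheta \oplus \overline{\mcF}_\varTheta$ of \S\ref{SS10}, with bottom step $\mcF^{2\ell -2}_{\varTheta, +} = \mcF^{2\ell -2}_\varTheta = \varTheta$. The construction of \S\ref{SS23} produces the $\mcD^{< \infty}$-submodule $\mcF$ of $\mcF_{\varTheta, +}$ generated by this line subbundle, and the crux of the argument is the identity $\mcF = \mcF_\varTheta$. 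Indeed, Definition \ref{Def7} requires $\nabla^\diamondsuit_+ (\mcF_\varTheta) \subseteq \Omega \otimes \mcF_\varTheta$, so $\mcF_\varTheta$ is itself a $\mcD^{< \infty}$-submodule containing $\varTheta$, whence $\mcF \subseteq \mcF_\varTheta$; since $\mcF$ (by \S\ref{SS23}) and $\mcF_\varTheta$ are both rank $2\ell -1$ subbundles of $\mcF_{\varTheta, +}$ that are direct summands — $\mcF$ by \eqref{Eq52} and $\mcF_\varTheta$ by construction — a comparison of fibers together with Nakayama's lemma forces this inclusion to be an isomorphism (this is the same phenomenon underlying Proposition \ref{Rem9}, now pinned to $\varTheta' = \varTheta$). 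Granting $\mcF = \mcF_\varTheta$, the induced filtration $\mcF^j = \mcF_\varTheta \cap \mcF^j_{\varTheta, +}$ recovers $\{ \mcF^j_\varTheta \}_j$, the quotient $\overline{\mcF} = \mcF_{\varTheta, +}/\mcF_\varTheta$ is identified with $\overline{\mcF}_\varTheta$ compatibly with \eqref{Eq130}, and the resulting pair $(\nabla^\diamondsuit_{\sphericalangle, + \Rightarrow \emptyset}, \nu(\nabla^\diamondsuit_{\sphericalangle, +}))$ of \eqref{Eq68} is a $(\mr{GO}_{2\ell -1}, \vartheta)$-oper equipped with the morphism $\nu(\nabla^\diamondsuit_{\sphericalangle, +})$ of \eqref{Eq60}, hence lies in $\mcO p^+_{2\ell -1, \vartheta}(U^\mr{log})$.

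Next I would treat the inverse functor. Applied to an object of $\mcO p^+_{2\ell -1, \vartheta}(U^\mr{log})$ — a $(\mr{GO}_{2\ell -1}, \vartheta)$-oper $(\nabla^\diamondsuit, \omega)$ together with $\nu$ — the inverse construction in the proof of Proposition \ref{Prop3} builds the bundle $\mcF_\varTheta \oplus \overline{\mcF}_\varTheta$, the filtration $\mcF^j_\varTheta \oplus \overline{\mcF}_\varTheta$ (for $j \leq \ell -1$) resp.\ $\mcF^j_\varTheta$ (for $j \geq \ell$), and the connection $(\nabla^\diamondsuit \oplus \nabla_{\overline{\mcF}_\varTheta}) + \nu$. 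This is visibly the $\vartheta$-model $(\mcF_{\varTheta, +}, \{ \mcF^j_{\varTheta, +} \}_j)$, while the orthogonal extension of $\omega$ (with $\overline{\mcF}_\varTheta$-component given by \eqref{Eq127}) and the determinant datum coincide with $(\omega_+, \varpi_\varTheta)$ by the defining formulas of \S\ref{SS10} and the isomorphisms \eqref{Eq131} and \eqref{Eq130}; moreover $\nabla^\diamondsuit_+ (\mcF_\varTheta) \subseteq \Omega \otimes \mcF_\varTheta$ holds because $\nu$ takes values in $\Omega \otimes \mcF_\varTheta$. Hence the output is a $(\mr{GO}_{2\ell}^0, \vartheta)$-oper, which gives essential surjectivity of the restricted forward functor; combined with the previous paragraph and the full faithfulness inherited from \eqref{Eq68}, this yields the equivalence \eqref{Eq138}.

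Finally, compatibility with \'{e}tale pull-back and with base-change would follow by combining the corresponding assertion for \eqref{Eq68} in Proposition \ref{Prop3} with the fact that the $\vartheta$-models $\mcF_\varTheta = \mcD^{< (2\ell -1)} \otimes \varTheta$, $\mcF_{\varTheta, +}$, and all their auxiliary data are manufactured from $\vartheta$ by operations — the sheaves $\mcD^{< j}$, tensor products, and the determinant — that commute with these operations, using that $\vartheta$ itself is stable under the relevant pull-backs and base-changes as recorded in \S\ref{SS10}. The main obstacle is the identification $\mcF = \mcF_\varTheta$ in the forward direction: this is precisely where the defining invariance $\nabla^\diamondsuit_+ (\mcF_\varTheta) \subseteq \Omega \otimes \mcF_\varTheta$ of a $(\mr{GO}_{2\ell}^0, \vartheta)$-oper must be married to the subbundle/rank comparison so that the generated submodule is not merely abstractly a theta-model but literally the summand $\mcF_\varTheta$ attached to the \emph{same} theta characteristic. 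Everything else is either formal manipulation of the direct summands $\mcF_\varTheta$ and $\overline{\mcF}_\varTheta$ or inherited directly from Proposition \ref{Prop3}.
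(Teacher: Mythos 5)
Your proposal is correct and follows essentially the same route as the paper, which disposes of this Proposition-Definition in one line ("the assertion follows from the various definitions involved"); your write-up simply makes explicit the two checks implicit there, namely that the invariance condition $\nabla^\diamondsuit_+(\mcF_\varTheta)\subseteq \Omega\otimes\mcF_\varTheta$ together with a rank comparison forces the generated $\mcD^{<\infty}$-submodule to be exactly $\mcF_\varTheta$, and that the inverse construction of Proposition \ref{Prop3} visibly outputs the $\vartheta$-model. No gaps.
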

\begin{proof}
The assertion follows from the various definitions involved (including the construction of the equivalence \eqref{Eq68}).
\end{proof}
\SSP

Denote by $\vartheta_0$ the  $(2\ell -1)$-theta characteristic  $(\varTheta_0, d)$ of $U^\mr{log}/S^\mr{log}$, where $\varTheta_0 := \Omega^{\otimes (\ell-1)}$ and  we regard the universal derivation $d : \mcO_U \rightarrow \Omega$
as an $S^\mr{log}$-connection on $\mcT^{\otimes   (2\ell -1)(\ell -1)} \otimes (\Omega^{\otimes (\ell-1)})^{\otimes (2\ell -1)}$
via the  identification
$\mcT^{\otimes   (2\ell -1)(\ell -1)} \otimes (\Omega^{\otimes (\ell-1)})^{\otimes (2\ell -1)} = \mcO_U$ induced by  $\mcT \otimes \Omega = \mcO_U$.

Now, let us take a $(\mr{GO}_{2\ell -1}, \vartheta)$-oper
$\nabla^\diamondsuit_{\sphericalangle} := (\nabla^\diamondsuit,  \omega)$ on $U^\mr{log}/S^\mr{log}$.
Consider the composite
\begin{align} \label{Eq193}
\mcF_{\varTheta_0} \left(= \mcD^{< (2\ell -1)} \otimes \varTheta_0 \right)
\hookrightarrow \mcD^{< \infty} \otimes (\overline{\mcF}_\varTheta^\vee \otimes \mcF_\varTheta) \rightarrow  \overline{\mcF}_\varTheta^\vee \otimes \mcF_\varTheta,
\end{align}
where the first arrow arises from the inclusions $\mcD^{< (2\ell -1)} \hookrightarrow \mcD^{< \infty}$ and $\varTheta_0 \left(= \overline{\mcF}_\varTheta^\vee \otimes \mcF_\varTheta^{2\ell -2} \right) \hookrightarrow \overline{\mcF}_\varTheta^\vee \otimes \mcF_\varTheta$, and  the second arrow denotes the $\mcD^{< \infty}$-action on $\overline{\mcF}_\varTheta^\vee \otimes \mcF_\varTheta$ determined by $\nabla^\vee_{\overline{\mcF}_\varTheta} \otimes \nabla^\diamondsuit$.
Since $(\mcF_\varTheta, \nabla^\diamondsuit, \{ \mcF^j_\varTheta \}_j)$ forms a $\mr{GL}_{2\ell -1}$-oper,
this composite turns out to be  an isomorphism.
By means of this isomorphism, $\nabla^\diamondsuit$ may be transposed into an $S^\mr{log}$-connection
$\nabla^\diamondsuit_{0}$ on $\mcF_{\varTheta_0}$. 
The tensor product of $\omega$ and the canonical isomorphism $(\overline{\mcF}^\vee)^{\otimes 2} \xrightarrow{\sim} \mcN_{\varTheta}$ induces, via  \eqref{Eq193}, an $\mcO_U$-valued  bilinear map $\omega_0 : \mcF_{\varTheta_0}^{\otimes 2} \rightarrow \mcO_U$ on $\mcF_{\varTheta_0}$.
It is verified that 
the resulting pair
\begin{align} \label{Eq195}
\nabla^\diamondsuit_{\sphericalangle, 0} := (\nabla_0^\diamondsuit, \omega_0)
\end{align}
 forms a $(\mr{GO}_{2\ell -1}, \vartheta_0)$-oper on $U^\mr{log}/S^\mr{log}$.
Hence,  for each $(\mr{GO}_{2\ell}^0, \vartheta)$-oper
$\nabla^\diamondsuit_{\sphericalangle, +} := (\nabla^\diamondsuit_+,  \omega_+)$  with $\nabla^\diamondsuit_{\sphericalangle, +\Rightarrow \emptyset} = \nabla^\diamondsuit_{\sphericalangle}$ (cf. Proposition-Definition \ref{Prop26}),
the associated morphism 
$\nu (\nabla^\diamondsuit_{\sphericalangle, +})$ may be regarded as   an element of $H^0 (X, \mcF_{\varTheta_0})$ via  \eqref{Eq193}.

\LSP
\subsection{The case of pointed stable curves} \label{SS671}

In the case where the underlying log curve arises from a pointed stable curve,
the above proposition implies the following assertion.

\SSP
\bpr \label{Prop155}
Suppose that $U^\mr{log}/S^\mr{log} = X^\mr{log}/S^\mr{log}$ for some pointed stable curve $\msX := (X/S, \{ \sigma_i \}_{i=1}^r)$  over an affine $k$-scheme $S$.
Then, there exists a canonical bijection of sets
\begin{align} \label{Eq140}
\left\{ \begin{matrix}  \text{isomorphism classes of} \\ \text{$(\mr{GO}_{2\ell}^0, \vartheta)$-opers on $\msX$} \end{matrix}\right\}
\xrightarrow{\sim}
\left\{ \begin{matrix}  \text{isomorphism classes of} \\ \text{$(\mr{GO}_{2 \ell -1}, \vartheta)$-opers on $\msX$} \end{matrix}\right\} \times H^0 (X, \Omega^{\otimes \ell}).
\end{align}
Moreover,  the  formation of this bijection   commutes with base-change to $S$-schemes.
\epr
\begin{proof}
Let us take a $(\mr{GO}_{2 \ell -1},\vartheta)$-oper  $\nabla_\sphericalangle^\diamondsuit := (\nabla^\diamondsuit, \omega)$ on $\msX$.
For simplicity, we write 
$\nabla := \nabla_0^\diamondsuit$.
By Proposition-Definition \ref{Prop26} and the discussion following that proposition,   the set of isomorphism classes of 
$(\mr{GO}^0_{2\ell}, \vartheta)$-opers $\nabla_{\sphericalangle, +}^\diamondsuit$ on $\msX$ with  $\nabla_{\sphericalangle, +\Rightarrow \emptyset}^\diamondsuit = \nabla_{\sphericalangle}^\diamondsuit$
 is  in bijection with (the underlying set of) the  cokernel
of the morphism
\begin{align}
H^0 (\nabla) : H^0 (X, \mcF_{\varTheta_0}) \rightarrow H^0 (X, \Omega \otimes \mcF_{\varTheta_0})
\end{align}
induced by $\nabla$.
Thus, the assertion follows from Lemma \ref{Lem19} below.
\end{proof}

\SSP
\ble \label{Lem19}
Let us keep the notation in the proof of Proposition \ref{Prop155}.
Then,  the $k$-linear composite 
\begin{align} \label{Eq177}
H^0 (X, \Omega^{\otimes \ell}) \xrightarrow{} H^0 (X, \Omega \otimes \mcF_{\varTheta_0}) \twoheadrightarrow \mr{Coker}(H^0 (\nabla))
\end{align}
is bijective, 
where 
the first arrow arises from the inclusion $\Omega^{\otimes \ell} \left(= \Omega \otimes \mcF_{\varTheta_0}^{2\ell -2} \right) \hookrightarrow \Omega \otimes \mcF_{\varTheta_0}$.
\ele
\begin{proof}
For each $j =1, \cdots, 2\ell -1$, we shall write
\begin{align}
\nabla^j : \mcF_{\varTheta_0}^j \rightarrow \Omega \otimes \mcF_{\varTheta_0}^{j-1}
\end{align}
for the morphism obtained by restricting $\nabla$ and write $H^0 (\nabla^j) : H^0 (X, \mcF_{\varTheta_0}^j) \rightarrow H^0 (X, \Omega \otimes \mcF_{\varTheta_0}^{j-1})$ for the associated   morphism of $k$-vector spaces.
Since  $\mcF_{\varTheta_0}^j / \mcF_{\varTheta_0}^{j+1} \xrightarrow{\sim} \mcT^{\otimes (\ell -1-j)}$,
the morphism $H^0 (X, \mcF_{\varTheta_0}^{\ell -1}) \rightarrow H^0 (X, \mcF_{\varTheta_0})$ and $H^0 (X, \Omega \otimes \mcF_{\varTheta_0}^{\ell -2}) \rightarrow H^0 (X, \Omega \otimes \mcF_{\varTheta_0})$ induced from the natural inclusions $\mcF_{\varTheta_0}^{\ell -1} \hookrightarrow \mcF_{\varTheta_0}$ and $\Omega \otimes \mcF_{\varTheta_0}^{\ell -2} \hookrightarrow \Omega\otimes \mcF_{\varTheta_0}$, respectively, are bijective.
This implies that the natural morphism $\mr{Coker}(H^0 (\nabla^{\ell -1})) \rightarrow \mr{Coker}(H^0 (\nabla))$ is bijective.

Next, for each $j= \ell -1, \cdots, 2\ell -2$,  consider the following morphism of short exact sequences:
\begin{align} \label{Eq49}
\vcenter{\xymatrix@C=46pt@R=36pt{
0\ar[r] & \mcF_{\varTheta_0}^{j+1} \ar[r]^-{\mr{inclusion}}  \ar[d]^-{\nabla^{j+1}} &  \mcF_{\varTheta_0}^{j} \ar[r]^-{\mr{quotient}} \ar[d]^-{\nabla^j} &  \mcF_{\varTheta_0}^j /\mcF_{\varTheta_0}^{j+1} \ar[r] \ar[d] & 0 
\\
0 \ar[r] & \Omega \otimes \mcF_{\varTheta_0}^{j} \ar[r]_-{\mr{inclusion}} & \Omega \otimes \mcF_{\varTheta_0}^{j-1}  \ar[r]_-{\mr{quotient}} & \Omega \otimes (\mcF_{\varTheta_0}^{j-1}/\mcF_{\varTheta_0}^j) \ar[r] & 0.
 }}
\end{align}
The right-hand vertical arrow is an isomorphism because  $(\mcF_{\varTheta_0}, \nabla, \{ \mcF_{\varTheta_0}^j \}_{j})$ forms a $\mr{GL}_{2\ell -1}$-oper.
Hence, this diagram induces a morphism of exact sequences of $k$-vector spaces
\begin{align} \label{Eq49}
\vcenter{\xymatrix@C=10pt@R=36pt{
0\ar[r] &H^0 (X, \mcF_{\varTheta_0}^{j+1})  \ar[r]  \ar[d]^-{H^0(\nabla^{j+1})} &  H^0 (X, \mcF_{\varTheta_0}^{j}) \ar[r] \ar[d]^-{H^0(\nabla^j)} &  H^0 (X, \mcF_{\varTheta_0}^j /\mcF_{\varTheta_0}^{j+1}) \ar[r] \ar[d]^-{\wr} & H^1 (X, \mcF_{\varTheta_0}^{j+1}) \ar[d]^-{H^1 (\nabla^{j+1})} 
\\
0 \ar[r] & H^0 (X, \Omega \otimes \mcF_{\varTheta_0}^{j}) \ar[r] & H^0 (X, \Omega \otimes \mcF_{\varTheta_0}^{j-1})  \ar[r] & H^0 (X, \Omega \otimes (\mcF_{\varTheta_0}^{j-1}/\mcF_{\varTheta_0}^j)) \ar[r] & H^1 (X, \Omega \otimes \mcF_{\varTheta_0}^{j}).
 }}
\end{align}
If  $j =\ell -1$,
then the equalities  $h^0 (\mcF_{\varTheta_0}^{j}/\mcF_{\varTheta_0}^{j+1}) = h^0 (\Omega \otimes (\mcF_{\varTheta_0}^{j-1}/\mcF_{\varTheta_0}^j))=1$ hold, and hence, 
the natural morphism $\mr{Coker} (H^0 (\nabla^{\ell})) \rightarrow \mr{Coker}(H^0 (\nabla^{\ell -1}))$ is bijective.
On the other hand,
if $j > \ell -1$,
then  the equalities  $h^1 (\mcF_{\varTheta_0}^{j+1}) = h^1 (\Omega \otimes \mcF_{\varTheta_0}^j) = 0$, so
the snake lemma applied to this diagram shows that
the morphism 
$\mr{Coker} (H^0 (\nabla^{j+1})) \rightarrow \mr{Coker}(H^0 (\nabla^{j}))$ is bijective.
By  the observations made so far, 
the morphism 
\begin{align}
H^ 0(X, \Omega^{\otimes \ell}) \left(= \mr{Coker}(H^0 (\nabla^{2\ell -1})) \right)\rightarrow \mr{Coker}(H^0 (\nabla))
\end{align}
turns out to be  bijective.
This completes the proof of this assertion.
\end{proof}

\LSP
\subsection{The relation between $\mfs \mfo_{2\ell}$-opers and $\mfs \mfo_{2\ell -1}$-opers
} \label{SS29}

Let   us take a  $\mr{GO}^0_{2\ell}$-oper   $\msF^\heartsuit_{\sphericalangle, +} := (\mcF, \nabla, \{ \mcF^j \}_j, \mcN, \nabla_\mcN, \omega, \varpi)$ on $U^\mr{log}/S^\mr{log}$.
Then, $(\mcF, \{ \mcF^j \}_{j=0}^{2 \ell}, \mcN, \omega, \varpi)$
induces a $B$-bundle $\mcE_B$ on $U$ via projectivization, i.e., via change of structure group by the projection $\mr{GO}_{2\ell}^0 \twoheadrightarrow \mr{PGO}^0_{2\ell}$.
Moreover, $(\nabla, \nabla_\mcN)$ determines an $S^\mr{log}$-connection  $\nabla_\mcE$ on the $\mr{PGO}_{2 \ell}^0$-bundle  $\mcE := \mcE_B \times^B \mr{PGO}_{2 \ell}^0$.
Just as in the discussion of  ~\cite[(c).\,2.9]{BeDr2},
the resulting pair 
\begin{align} \label{Eq37}
\msF^{\heartsuit \Rightarrow \spadesuit}_{\sphericalangle, +} :=(\mcE_B, \nabla_\mcE)
\end{align}
 specifies an $\mfs \mfo_{2 \ell}$-oper.
We here omit 
 the precise definition of a $\mfg$-oper for
a simple Lie algebra $\mfg$.
For its details (in the case where the underlying curve is a pointed stable curve), we refer the reader  to ~\cite[Definition 2.1]{Wak8}.

\SSP
\bpr \label{Prop28}
Assume  that $H^2 (U, \mcT^{\otimes m}) = 0$  for every integer $m$ with  $ -\ell +1 \leq m \leq  \ell -1$.
Then, the assignment  $(\nabla_\sphericalangle^{\diamondsuit \Rightarrow \heartsuit})^{\Rightarrow \spadesuit}$ determines a bijection of sets
\begin{align} \label{Eq76}
\left\{ \begin{matrix}  \text{isomorphism classes of} \\ \text{$(\mr{GO}_{2\ell}^0, \vartheta)$-opers on $U^\mr{log}/S^\mr{log}$} \end{matrix}\right\}
\xrightarrow{\sim}
\left\{ \begin{matrix}  \text{isomorphism classes of} \\ \text{$\mfs \mfo_{2\ell}$-opers on $U^\mr{log}/S^\mr{log}$} \end{matrix}\right\}.
\end{align}
(Note that the assumption imposed above is fulfilled when $U^\mr{log}/S^\mr{log} = X^\mr{log}/S^\mr{log}$ for some pointed stable curve $\msX := (X/S, \{ \sigma_i \}_{i=1}^r)$  over an affine $k$-scheme $S$.
In that case, the formation of the bijection \eqref{Eq76} commutes with 
 base-change to affine schemes  over $S$.)
\epr
\begin{proof}
Since
the algebraic group $\mr{GO}_{2\ell}^0$ admits a natural inclusion $\mr{GO}_{2\ell}^0 \hookrightarrow \mr{GL}_{2\ell}$,
 the injectivity of the map \eqref{Eq76} follows from ~\cite[Proposition 4.22, (ii)]{Wak8}.

 Next, 
we shall  consider the  surjectivity of \eqref{Eq76}.
Let $\mcE^\spadesuit_+$ be an $\mfs \mfo_{2\ell}$-oper on $U^\mr{log}/S^\mr{log}$.
There exists a  covering $\{ U_\alpha \}_{\alpha \in I}$ (where $I$ denotes an index set) of $U$ in the \'{e}tale topology 
such that the restriction  $\msE^\spadesuit_+ |_{U_\alpha}$ to each $U_\alpha$ arises, via projection,  from
a $(\mr{GO}_{2\ell}^0, \vartheta_\alpha)$-oper $\nabla^\diamondsuit_{\sphericalangle, +, \alpha}$ on $U_\alpha^\mr{log}/S^\mr{log}$ for some $(2\ell -1)$-theta characteristic $\vartheta_\alpha$ of $U_\alpha^\mr{log}/S^\mr{log}$ (cf. Proposition \ref{Rem9}).
After possibly  tensoring  $\nabla^\diamondsuit_{\sphericalangle, +, \alpha}$ with $\vartheta / \vartheta_\alpha$ (cf. Remark \ref{Rem99}),
we may assume that 
 $\nabla^\diamondsuit_{\sphericalangle, +, \alpha}$
 is a $(\mr{GO}_{2\ell}^0, \vartheta)$-oper.
By Proposition-Definition  \ref{Prop26},
 $\nabla^\diamondsuit_{\sphericalangle, +, \alpha}$
 corresponds to a pair $(\nabla^\diamondsuit_{\sphericalangle,  \alpha}, \nu_\alpha)$, where  $\nabla^\diamondsuit_{\sphericalangle,  \alpha}$ denotes a $(\mr{GO}_{2\ell -1}, \vartheta)$-oper on $U^\mr{log}_\alpha/S^\mr{log}$.
If $U_{\alpha, \beta} := U_\alpha \cap U_\beta \neq \emptyset$ ($\alpha, \beta \in I$),
then since  the restrictions  
  $\nabla^\diamondsuit_{\sphericalangle,  \alpha} |_{U_{\alpha, \beta}}$ and   $\nabla^\diamondsuit_{\sphericalangle,  \beta}  |_{U_{\alpha, \beta}}$  are isomorphic  to each other via  taking their projectivizations, it follows from ~\cite[Proposition 4.22, (ii)]{Wak8} again 
  that $\nabla^\diamondsuit_{\sphericalangle,  \alpha} |_{U_{\alpha, \beta}} \cong \nabla^\diamondsuit_{\sphericalangle,  \beta} |_{U_{\alpha, \beta}}$.
  Then, it follows from ~\cite[Proposition 5.6]{Wak8} that $\nabla^\diamondsuit_{\sphericalangle,  \alpha}$ may be glued together to obtain a $(\mr{GO}_{2\ell -1}, \vartheta)$-oper 
  $\nabla^\diamondsuit_{\sphericalangle}$ on $U^\mr{log}/S^\mr{log}$.
 Moreover,  let us  replace $\{ U_\alpha \}_\alpha$ with its refinement, and  suppose  that, for  any pair $(\alpha, \beta) \in I^2$ with  $U_{\alpha, \beta} \neq \emptyset$,
  there exists a section $\mu_{\alpha, \beta} \in H^0 (U_{\alpha, \beta}, \overline{\mcF}_\varTheta^\vee \otimes \mcF_\varTheta)$ with $\nu_\alpha - \nu_\beta = (\nabla^\vee_{\overline{\mcF}_\varTheta} \otimes \nabla^\diamondsuit) (\mu_{\alpha, \beta})$.
  Here, recall   that $\mcF_{\varTheta_0} := \overline{\mcF}_\varTheta^\vee \otimes \mcF_\varTheta$ admits a filtration
    whose subquotients are of the form $\mcT^{\otimes m}$ (with $- \ell + 1 \leq m \leq \ell -1$).
  By assumption, we  have  
  $H^2 (U, \overline{\mcF}_\varTheta^\vee \otimes \mcF_\varTheta) = 0$.
  This implies that 
  the collection  $(\mu_{\alpha, \beta})_{\alpha, \beta}$ form a \v{C}ech $1$-cocycle after possibly replacing $\mu_{\alpha, \beta}$ with another.
  By means of the automorphisms  $\mr{id} + \mu_{\alpha, \beta}$ of $\mcF_\varTheta |_{U_{\alpha, \beta}}$  for various $(\alpha, \beta)$'s,
   the  $\nabla^\diamondsuit_{\sphericalangle, +, \alpha}$'s may be glued together  to
  obtain a $(\mr{GO}_{2\ell}^0, \vartheta)$-oper  $\nabla^\diamondsuit_{\sphericalangle, +}$ on $U^\mr{log}/S^\mr{log}$.
  Since $\msE^\spadesuit_+$ does not admit  nontrivial automorphisms (cf. ~\cite[Proposition 2.9]{Wak8}), we have
   $(\nabla^{\diamondsuit \Rightarrow \heartsuit}_{\sphericalangle, +})^{\Rightarrow \spadesuit} \cong \msE^\spadesuit_+$.
  This completes the proof of the surjectivity of  \eqref{Eq76}.
\end{proof}
\SSP

By combining Propositions \ref{Prop155}, \ref{Prop28} and ~\cite[Theorem 5.12]{Wak8}, we obtain the following assertion.

\SSP
\begin{tdef} \label{Cor4}
Suppose that $U^\mr{log}/S^\mr{log} = X^\mr{log}/S^\mr{log}$ for some pointed stable curve $\msX := (X/S, \{ \sigma_i \}_{i=1}^r)$ over an affine $k$-scheme $S$.
Then, there exists a canonical bijection of sets
\begin{align} \label{Eq118}
\left\{ \begin{matrix}  \text{isomorphism classes of} \\ \text{$\mfs \mfo_{2 \ell}$-opers on $\msX$} \end{matrix}\right\}
\xrightarrow{\sim}
\left\{ \begin{matrix}  \text{isomorphism classes of} \\ \text{$\mfs \mfo_{2 \ell -1}$-opers on $\msX$} \end{matrix}\right\} \times H^0 (X, \Omega^{\otimes \ell}).
\end{align}
Moreover, the formation of this bijection commutes with base-change to affine schemes over $S$.

If we are given  an $\mfs \mfo_{2\ell}$-oper $\msE^\spadesuit_{+}$, 
then
 the $\mfs \mfo_{2\ell -1}$-oper  and the element of $H^0 (X, \Omega^{\otimes \ell})$ associated to $\msE^\spadesuit_+$ via \eqref{Eq118} will be denoted by 
 $\msE^\spadesuit_{+ \Rightarrow \emptyset}$
 and $\nu (\msE^\spadesuit_+)$, respectively.
\end{tdef}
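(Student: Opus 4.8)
The plan is to realize the bijection \eqref{Eq118} as a composite of three bijections already at our disposal, each linking one of its two sides to the auxiliary world of $(\mr{GO}, \vartheta)$-opers. First I would fix throughout the canonical $(2\ell -1)$-theta characteristic $\vartheta_0$ of $X^\mr{log}/S^\mr{log}$ introduced in \S\,\ref{SS670}. Because $\msX$ is a pointed stable curve over an affine $k$-scheme $S$, the cohomological hypothesis $H^2 (X, \mcT^{\otimes m}) = 0$ (for $-\ell +1 \leq m \leq \ell -1$) required by Proposition \ref{Prop28} holds automatically, as noted there, so all three inputs below may be applied with $\vartheta = \vartheta_0$.

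Next I would assemble the composite. Proposition \ref{Prop28} provides, via projectivization, a bijection between the isomorphism classes of $(\mr{GO}_{2\ell}^0, \vartheta_0)$-opers on $\msX$ and those of $\mfs \mfo_{2\ell}$-opers on $\msX$; inverting it carries an $\mfs \mfo_{2\ell}$-oper $\msE^\spadesuit_+$ to a $(\mr{GO}_{2\ell}^0, \vartheta_0)$-oper $\nabla^\diamondsuit_{\sphericalangle, +}$, unique up to isomorphism. Proposition \ref{Prop155} then identifies such $(\mr{GO}_{2\ell}^0, \vartheta_0)$-opers with pairs consisting of a $(\mr{GO}_{2\ell -1}, \vartheta_0)$-oper $\nabla^\diamondsuit_{\sphericalangle, + \Rightarrow \emptyset}$ and an element of $H^0 (X, \Omega^{\otimes \ell})$, the latter being $\nu (\nabla^\diamondsuit_{\sphericalangle, +})$ read off through Lemma \ref{Lem19}. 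Finally, \cite[Theorem 5.12]{Wak8}, the odd-orthogonal counterpart of Proposition \ref{Prop28}, converts the first factor into an $\mfs \mfo_{2\ell -1}$-oper. Composing these three while leaving the $H^0 (X, \Omega^{\otimes \ell})$-factor untouched yields the desired bijection, and one defines $\msE^\spadesuit_{+ \Rightarrow \emptyset}$ and $\nu (\msE^\spadesuit_+)$ to be its two resulting components.

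For the base-change assertion I would simply observe that each of the three constituent bijections is stated to commute with base-change to affine $S$-schemes (Propositions \ref{Prop28}, \ref{Prop155}, and \cite[Theorem 5.12]{Wak8}), that the intervening cohomological vanishing and the formation of $\vartheta_0$ are stable under such base-change, and that the factor $H^0 (X, \Omega^{\otimes \ell})$ is transported by the intrinsic pullback of logarithmic $1$-forms; hence the composite inherits the property. The one point that genuinely deserves care — and where I expect to spend real effort — is the word \emph{canonical}: the intermediate objects $\nabla^\diamondsuit_{\sphericalangle, +}$ and $\nabla^\diamondsuit_{\sphericalangle, + \Rightarrow \emptyset}$ depend on the auxiliary $\vartheta_0$, whereas both endpoints of \eqref{Eq118} do not. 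To check that the resulting map does not secretly depend on the chosen theta characteristic, I would invoke the twisting operation of Remark \ref{Rem99}: replacing $\vartheta_0$ by $\vartheta_0 \otimes \msL$ for a flat line bundle $\msL$ sends each $(\mr{GO}, \vartheta_0)$-oper to its $\msL$-twist compatibly on both the $2\ell$- and the $(2\ell -1)$-sides, so the induced maps on projectivizations (i.e.\ on $\mfs \mfo_{2\ell}$- and $\mfs \mfo_{2\ell -1}$-opers) agree, while the $H^0 (X, \Omega^{\otimes \ell})$-component is manifestly unaffected since $\Omega^{\otimes \ell}$ is intrinsic to $\msX$. This reduces canonicity to the compatibility of the constructions of \S\,\ref{SS23} and \S\,\ref{SS670} with twisting, which is routine.
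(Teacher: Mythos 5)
Your proposal is correct and follows essentially the same route as the paper, whose proof is precisely the composition of Proposition \ref{Prop155}, Proposition \ref{Prop28}, and \cite[Theorem 5.12]{Wak8} (the paper states this in one line just before the theorem). Your additional care about independence of the auxiliary theta characteristic, handled via the twisting of Remark \ref{Rem99}, is a reasonable elaboration of a point the paper leaves implicit, but it does not change the argument.
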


\LSP
\subsection{The moduli stack of  $\mfs \mfo_{2\ell}$-opers} \label{SS30}

Denote by $\mcS ch_{/k}$ the category of $k$-schemes.
For $s \in \{ 2\ell -1, 2 \ell \}$,
we shall write
 \begin{align}
 \mcO p_{s, g, r}
 \end{align}
 for the category over $\mcS ch_{/k}$ defined as follows:
 \begin{itemize}
 \item
 The objects are  pairs $(\msX, \msE^\spadesuit)$, where $\msX$ denotes an $r$-pointed stable curve of genus $g$ over a $k$-scheme $S$ and $\msE^\spadesuit$ denotes an $\mfs \mfo_s$-oper  on $\msX$;
 \item
 The morphisms from $(\msX, \msE^\spadesuit)$ to $(\msX', \msE'^\spadesuit)$ are morphisms of $r$-pointed curves $(\phi, \Phi) : \msX \rightarrow \msX'$, in the sense of   ~\cite[Definition 1.36, (ii)]{Wak8}, satisfying  $\msE^\spadesuit \cong \phi^* (\msE'^\spadesuit)$;
 \item
 The projection $\mcO p_{s, g,r} \rightarrow \mcS ch_{/k}$ is given by assigning, to each pair $(\msX, \msE^\spadesuit)$ as above, the base scheme $S$ of $\msX$. 
 \end{itemize}

The assignment $(\msX, \msE^\spadesuit) \mapsto \msX$ defines a morphism from $\mcO p_{s, g, r}$ to the moduli stack
$\overline{\mcM}_{g, r}$.
According to ~\cite[Theorem A]{Wak8}, $\mcO p_{s, g, r}$ may be represented by a smooth Deligne-Mumford stack over $k$ and forms a relative affine scheme over $\overline{\mcM}_{g, r}$.
The assignment $(\msX, \msE^\spadesuit_+) \mapsto (\msX, \msE^\spadesuit_{+ \Rightarrow \emptyset})$ (cf. Theorem-Definition  \ref{Cor4}) determines a morphism  
\begin{align} \label{Eq125}
\mcO p_{2\ell, g, r} \rightarrow 
\mcO p_{2\ell -1, g, r}
\end{align}
over $\overline{\mcM}_{g, r}$, by which we regard $\mcO p_{2\ell, g, r}$  as a stack over $\mcO p_{2\ell -1, g, r}$.

Next, we shall write $\Omega_{\mr{univ}} := \Omega_{\mcC_{g, r}^\mr{log}/\overline{\mcM}_{g, r}^\mr{log}}$, and write
\begin{align}
\mcU := \mcO p_{2 \ell -1, g, r} \times_{\overline{\mcM}_{g, r}} \mbV (f_{\mr{univ}*}(\Omega^{\otimes \ell}_{\mr{univ}})),
\end{align}
where 
$ \mbV (f_{\mr{univ}*}(\Omega^{\otimes \ell }_{\mr{univ}}))$ denotes the relative affine scheme determined by  the vector bundle 
$f_{\mr{univ}*}(\Omega^{\otimes \ell }_{\mr{univ}})$, i.e., 
the spectrum 
 of the  symmetric algebra $\mbS_{\mcO_{\overline{\mcM}_{g, r}}} (f_{\mr{univ}*}(\Omega^{\otimes \ell }_{\mr{univ}})^\vee)$ 
over $\mcO_{\overline{\mcM}_{g, r}}$ associated to the dual of 
    $f_{\mr{univ}*}(\Omega^{\otimes \ell }_{\mr{univ}})$.
By  Theorem-Definition \ref{Cor4}, the assignment $(\msX, \msE^\spadesuit_+) \mapsto ((\msX, \msE^\spadesuit_{+ \Rightarrow \emptyset}), \nu (\msE_+^\spadesuit))$ defines an isomorphism of stacks
\begin{align} \label{Eq72}
\mcO p_{2\ell, g, r} \xrightarrow{\sim} \mcU
\end{align}
over $\mcO p_{2 \ell -1, g, r}$.
In particular,  the morphism \eqref{Eq125} is surjective.
Moreover, by using  this isomorphism,
we equip  $\mcO p_{2\ell, g, r}$ with a structure of relative affine space on $\mcO p_{2 \ell -1, g, r}$ modeled on $\mbV (f_{\mr{univ}*}(\Omega^{\otimes \ell}_{\mr{univ}}))$.

\vspace{10mm}
\section{The moduli space of dormant $\mfs \mfo_{2 \ell}$-opers} \label{S31}
\LSP

This section focuses on $\mfs \mfo_{2 \ell}$-opers in  characteristic $p> 0$ and discusses those  with   vanishing $p$-curvature, i.e., dormant  $\mfs \mfo_{2 \ell}$-opers.
Considering  a cohomological description of infinitesimal  deformations,
we prove 
the generic \'{e}taleness of the  moduli space of dormant $\mfs \mfo_{2 \ell}$-opers (cf. Theorem \ref{Thm1}), which is the main result of the present paper.
Note that our proof is reduced  to the case where the underlying curve is  a $3$-pointed projective line by degenerating 
the underlying curve and then detaching its irreducible components.
This approach  is based on the proof of the generic \'{e}taleness for dormant $\mfs \mfl_n$-opers given in ~\cite{Wak8}.

In the rest of the present paper, we suppose that 
the characteristic $\mr{char}(k)$ of $k$ coincides with a prime number $p$
 with $p > 2 (2\ell -1)$.

\LSP
\subsection{Dormant $\mfs \mfo_{2\ell}$-opers} \label{SS45}

Let $U^\mr{log}/S^\mr{log}$ be as before
 and $\vartheta := (\varTheta, \nabla_\vartheta)$ a $(2\ell -1)$-theta characteristic of $U^\mr{log}/S^\mr{log}$ such that $\nabla_\vartheta$ has vanishing $p$-curvature.
(For the definition of $p$-curvature in the logarithmic setting, we refer to, e.g.,  ~\cite[Definitions 3.8 and 4.58]{Wak8}.
Also, it follows from the comment in ~\cite[\S\,4.6.4]{Wak8} that there always exists a $(2\ell -1)$-theta characteristic with vanishing $p$-curvature.)
In ~\cite[Definition 3.15]{Wak8},
we  defined the notion of a   dormant $\mfg$-oper, where $\mfg$ is  a  Lie algebra with certain conditions.
Similarly to  that notion,  we make the following definition.

\SSP
\bde \label{Def4}
Let 
 $\nabla_{\sphericalangle}^\diamondsuit := (\nabla^\diamondsuit, \omega)$
(resp., $\nabla_{\sphericalangle, +}^\diamondsuit := (\nabla^\diamondsuit_+, \omega_+)$) be a $(\mr{GO}_{2\ell -1}, \vartheta)$-oper (resp., a $(\mr{GO}_{2 \ell}^0, \vartheta)$-oper).
We say that 
$\nabla_{\sphericalangle}^\diamondsuit$
(resp., $\nabla_{\sphericalangle, +}^\diamondsuit$) is {\bf dormant} if
$\nabla^\diamondsuit$
(resp., $\nabla^\diamondsuit_+$) has vanishing $p$-curvature.
\ede
\SSP

Since $\nabla_\vartheta$ has vanishing $p$-curvature,
a $(\mr{GO}_{2\ell -1}, \vartheta)$-oper
(resp., a $(\mr{GO}_{2\ell}^0, \vartheta)$-oper)
 is dormant  if and only if the  $\mfs \mfo_{2\ell -1}$-oper (resp., $\mfs \mfo_{2\ell}$-oper) obtained from it via projectivization is dormant (cf. ~\cite[Remark 4.59]{Wak8}).
In particular, Proposition \ref{Prop28} induces  the following assertion.

\SSP
\bpr \label{Prop15}
Assume that $H^2 (U, \mcT^{\otimes m}) = 0$ for every integer $m$ with $-\ell +1 \leq m \leq \ell -1$.
Then, 
the bijection \eqref{Eq76} restricts to a bijection of sets
\begin{align} \label{Eq77}
\left\{ \begin{matrix}  \text{isomorphism classes of} \\ \text{dormant $(\mr{GO}_{2\ell}^0, \vartheta)$-opers on $U^\mr{log}/S^\mr{log}$} \end{matrix}\right\}
\xrightarrow{\sim}
\left\{ \begin{matrix}  \text{isomorphism classes of} \\ \text{dormant $\mfs \mfo_{2\ell}$-opers on $U^\mr{log}/S^\mr{log}$} \end{matrix}\right\}.
\end{align}
Moreover,  
the comment in parentheses described  in the statement of Proposition \ref{Prop28} is  also true in this case.
\epr
\SSP

For $s \in \{ 2\ell -1, 2\ell \}$,
we shall set
\begin{align} \label{essf3}
\mcO p_{s, g, r}^{^\mr{Zzz...}}
\end{align}
to be the closed substack of  $\mcO p_{s, g, r}$ classifying dormant $\mfs \mfo_s$-opers; this admits the natural projection
\begin{align} \label{erde}
\Pi_{s, g, r} : \mcO p_{s, g, r}^{^\mr
{Zzz...}} \rightarrow \overline{\mcM}_{g, r}.
\end{align}
(Hence, $\mcO p_{s, g, r}^{^\mr{Zzz...}} = \mcO p_{\mfs \mfo_s, g, r}^{^\mr{Zzz...}}$  and $\Pi_{s, g, r} = \Pi_{\mfs \mfo_s, g, r}$ in the terminology of Introduction.)
It follows from ~\cite[Theorem C]{Wak8} that  $\mcO p_{s, g, r}^{^\mr{Zzz...}}$ 
may be represented by a nonempty proper Deligne-Mumford stack over $k$ of dimension $3g-3+r$, and     $\Pi_{s, g, r}$ 
is surjective and  finite.

If $\msE^\spadesuit_+$ is a dormant $\mfs \mfo_{2\ell}$-oper, then the induced $\mfs \mfo_{2\ell -1}$-oper $\msE^\spadesuit_{+ \Rightarrow \emptyset}$ (cf. Theorem-Definition \ref{Cor4}) is dormant because of the construction of \eqref{Eq118}.
Hence,
the morphism \eqref{Eq125}
 restricts to a morphism  of stacks
\begin{align} \label{Eq120}
\chi : 
\mcO p_{2 \ell, g, r}^{^\mr{Zzz...}} \rightarrow \mcO p_{2 \ell -1, g, r}^{^\mr{Zzz...}}.
\end{align}
The assignment from each dormant 
$\mfs \mfo_{2\ell -1}$-oper
$\msE^\spadesuit$
 to the dormant 
 $\mfs \mfo_{2\ell}$-oper
  corresponding to the pair $(\msE^\spadesuit, 0)$ via  \eqref{Eq118} determines a section $\mcO p_{2 \ell -1, g, r}^{^\mr{Zzz...}} \rightarrow \mcO p_{2 \ell, g, r}^{^\mr{Zzz...}}$ of $\chi$.
  In particular, $\chi$ turns out to be surjective.

\LSP
\subsection{A cohomological criterion for unramifiedness} \label{SS32}

Let $\msX := (X/k, \{ \sigma_i \}_{i=1}^r)$ be  an $r$-pointed
``{\it smooth}" 
 curve  of genus $g$ over $k$ and $\vartheta := (\varTheta, \nabla_\vartheta)$ a $(2\ell -1)$-theta characteristic of $X^\mr{log}/k$ such that $\nabla_\vartheta$ has vanishing $p$-curvature.
Denote by $X^{(1)}$ the Frobenius twist of $X$ over $k$ and  by $F$ the relative Frobenius morphism $X \rightarrow X^{(1)}$ of $X/k$.

We shall  take a  dormant $(\mr{GO}_{2\ell }, \vartheta)$-oper 
$\nabla^\diamondsuit_{\sphericalangle, +} := (\nabla^\diamondsuit_+, \omega_+)$
  on $\msX$.
  Write $\nabla^\diamondsuit_{\sphericalangle, +\Rightarrow \emptyset} := \nabla^\diamondsuit_{\sphericalangle}  = (\nabla^\diamondsuit, \omega)$.
  Also, the associated $(\mr{GO}_{2\ell -1}, \vartheta_0)$-oper, i.e., ``$\nabla^\diamondsuit_{\sphericalangle, 0} $" constructed as in \eqref{Eq195},  will be denoted by $(\nabla, \omega_0)$.
  
  \SSP
\bpr \label{Prop1001}
Denote by $q$ the $k$-rational point of $\mcO p_{2\ell, g, r}^{^\mr{Zzz...}}$
classifying the dormant $\mfs \mfo_{2\ell }$-oper $\msE^\spadesuit_+ := (\nabla^{\diamondsuit \Rightarrow \heartsuit}_{\sphericalangle, +})^{\Rightarrow \spadesuit}$ (cf. \eqref{Eq221}, \eqref{Eq37}).
Then, the  morphism  $\chi$ 
is unramified at $q$ (i.e., there are no nontrivial first-order deformations of $\msE^\spadesuit_+$ preserving the dormancy condition and  inducing the trivial deformation of the dormant $\mfs \mfo_{2\ell -1}$-oper $\msE^\spadesuit_{+\Rightarrow \emptyset}$) if  and only if
the following equality  holds:
\begin{align}
\left(H^0 (X,  \Omega^{\otimes \ell} \cap \mr{Im}(\nabla)) = \right) H^0 (X, (\Omega \otimes \mcF_{\varTheta_0}^{2\ell -2}) \cap \mr{Im}(\nabla)) = 0.
\end{align}
\epr
\begin{proof}
To begin with, we introduce some notation.
Each morphism of abelian sheaves $\nabla' : \mcK^0 \rightarrow \mcK^1$ is  identified with a complex concentrated at degrees  $0$ and $1$, and  we denote this complex  by   $\mcK^\bullet [\nabla']$.
Moreover,
we denote by $\mbH^i (X, \mcK^\bullet [\nabla'])$ ($i \geq 0$) the $i$-th hypercohomology group of $\mcK^\bullet [\nabla']$.
Given  a vector bundle $\mcG$ on $X$, we shall write $\mcE nd (\mcG)$ (resp., $\mcE nd^0 (\mcG)$)  for the sheaf of $\mcO_X$-linear endomorphisms  (resp., $\mcO_X$-linear endomorphisms with vanishing trace) of $\mcG$.

Now, let us consider the proof.
Denote by
 $\mcE nd^\circledast (\mcF_{\varTheta, +})$ the subbundle of $\mcE nd (\mcF_{\varTheta, +})$ consisting of  endomorphisms $h : \mcF_{\varTheta, +} \rightarrow \mcF_{\varTheta, +}$ such that
$\mr{Im}(h)  \subseteq \mcF_{\varTheta}$ and  its restriction $h |_{\mcF_\varTheta}  \in \mcE nd(\mcF_\varTheta)$ lies in $\mcE nd^0 (\mcF_\varTheta)$.
The $k$-connection $(\nabla_+^\diamondsuit)^\vee \otimes \nabla_+^\diamondsuit$ on $\mcE nd (\mcF_{\varTheta, +})$ $\left(= \mcF_{\varTheta, +}^\vee \otimes \mcF_{\varTheta, +} \right)$ 
(resp., $(\nabla^\diamondsuit)^\vee \otimes \nabla^\diamondsuit$ on $\mcE nd(\mcF_\varTheta)$ $\left(= \mcF_\varTheta^\vee \otimes \mcF_\varTheta \right)$)
restricts to a $k$-connection $\nabla^\mr{ad}_\circledast$ on $\mcE nd^\circledast (\mcF_{\varTheta, +})$ (resp., $\nabla^{\mr{ad}}$ on $\mcE nd^0 (\mcF_\varTheta)$).
The morphism 
$\mcE nd^\circledast (\mcF_{\varTheta, +}) \rightarrow \mcE nd^0 (\mcF_\varTheta)$ given by 
$h \mapsto h |_{\mcF_\varTheta}$ 
preserves the connection, so it 
gives
  a surjection $\mcK^\bullet [\nabla_\circledast^\mr{ad}] \rightarrow \mcK^\bullet [\nabla^\mr{ad}]$.
On the other hand,
the composite injection $\mcF_{\varTheta_0} \hookrightarrow \mcE nd(\mcF_{\varTheta, +})$ induced, via \eqref{Eq193},  by 
 the natural injection $\mcF_\varTheta \hookrightarrow \mcF_{\varTheta, +}$ and surjection $\mcF_{\varTheta, +} \twoheadrightarrow \overline{\mcF}_{\varTheta}$ 
 factors through 
 the inclusion $\mcE nd^\circledast (\mcF_{\varTheta, +}) \hookrightarrow \mcE nd (\mcF_{\varTheta, +})$;
 the resulting morphism $\mcF_{\varTheta_0} \rightarrow \mcE nd^\circledast (\mcF_{\varTheta, +})$ preserve the connection, i.e.,
 specifies a morphism $\mcK^\bullet [\nabla] \rightarrow \mcK^\bullet [\nabla^\mr{ad}_\circledast]$.
 Since $\mr{Ker} (\nabla^\mr{ad}) \left(= \mbH^0 (X, \mcK^\bullet [\nabla^\mr{ad}]) \right) = 0$ (cf. ~\cite[Proposition 6.5]{Wak8}),
 the resulting short exact sequence $0 \rightarrow \mcK^\bullet [\nabla] \rightarrow \mcK^\bullet [\nabla^\mr{ad}_\circledast] \rightarrow \mcK^\bullet [\nabla^\mr{ad}] \rightarrow 0$ induces  a diagram 
 \begin{align} \label{Erh367}
\vcenter{\xymatrix@C=26pt@R=36pt{
 0 \ar[r] & \mbH^1 (X, \mcK^\bullet [\nabla])\ar[d]^-{\alpha (\nabla)} \ar[r] & \mbH^1 (X, \mcK^\bullet [\nabla^\mr{ad}_\circledast]) \ar[r] \ar[d]^-{\alpha (\nabla^\mr{ad}_\circledast)} & \mbH^1 (X, \mcK^\bullet [\nabla^\mr{ad}]) \ar[d]^-{\alpha (\nabla)}  \\
 0 \ar[r] & H^0 (X, \mr{Coker}(\nabla)) \ar[r] & H^0 (X, \mr{Coker}(\nabla^\mr{ad}_\circledast)) \ar[r] & H^0 (X, \mr{Coker}(\nabla^\mr{ad})),
 }}
\end{align}
where both the upper and lower horizontal sequences are exact (cf.  ~\cite[Corollary 3.2.2]{Ogu2} and the  discussion following ~\cite[Proposition 1.2.4]{Ogu1} for the exactness of the lower sequence), and all the  vertical arrows are surjective because of 
 the conjugate spectral sequences associated to $\mcK^\bullet [\nabla]$, $\mcK^\bullet [\nabla^\mr{ad}_\circledast]$, and $\mcK^\bullet [\nabla^\mr{ad}]$ (cf. ~\cite[Eq.\,(758)]{Wak8}).
  Using the second arrow in the upper horizontal sequence,
 we obtain a composite injection
 \begin{align} \label{Eq201}
 H^0 (X, \Omega^{\otimes \ell}) \xrightarrow{\eqref{Eq177}} \mr{Coker}(H^0 (\nabla)) \hookrightarrow \mbH^1 (X, \mcK^\bullet [\nabla]) \hookrightarrow \mbH^1 (X, \mcK^\bullet [\nabla_\circledast^\mr{ad}]),
 \end{align}
 where the second arrow of this sequence denotes the injection arising from the Hodge to de Rham spectral sequence of $\mcK^\bullet [\nabla]$ (cf. ~\cite[\S\,6.2.1]{Wak8}).
 This composite injection allows us to consider   $H^0 (X, \Omega^{\otimes \ell})$ as a subspace of  $\mbH^1 (X, \mcK^\bullet [\nabla_\circledast^\mr{ad}])$.
Here,  let us consider 
the space of first-order deformations of  the flat vector bundle $(\mcF_{\varTheta, +}, \nabla_+^\diamondsuit)$ preserving  the filtration $\mcF_\varTheta \subseteq \mcF_{\varTheta, +}$ that induces
the trivial deformation of $\mr{det}(\mcF_\varTheta)$ 
 via restriction and induces the trivial deformation of $\overline{\mcF}_\varTheta \left(= \mcF_{\varTheta, +}/\mcF_\varTheta \right)$ via taking quotient.
According to  well-known generalities on the deformation theory of flat vector  bundles (cf. ~\cite[Chap.\,6]{Wak8}),
this space  is in bijection  with
(the underlying set of)  $\mbH^1 (X, \mcK^\bullet [\nabla_\circledast^\mr{ad}])$.
By this bijection,
the subspace $H^0 (X, \Omega^{\otimes \ell})$ of $\mbH^1 (X, \mcK^\bullet [\nabla_\circledast^\mr{ad}])$ may be identified with the deformation space $\mr{Def}(\nabla_{\sphericalangle, +}^\diamondsuit)$ of the $(\mr{GO}_{2\ell}^0, \vartheta)$-oper  $\nabla_{\sphericalangle, +}^\diamondsuit$  inducing the trivial deformation  of  $\nabla_{\sphericalangle, + \Rightarrow \emptyset}^\diamondsuit$ (cf. Proposition \ref{Prop155} and \eqref{Eq72}).
 Moreover, it follows from ~\cite[Proposition 6.11 and the discussion in \S\,6.4.1]{Wak8} that the subspace ${^p}\mr{Def}(\nabla_{\sphericalangle, +}^\diamondsuit)$ of $\mr{Def}(\nabla_{\sphericalangle, +}^\diamondsuit)$ consisting of deformations preserving the dormancy condition corresponds bijectively to
 $ H^0 (X, \Omega^{\otimes \ell}) \cap \mr{Ker} (\alpha (\nabla^\mr{ad}_\circledast))$, which coincides with $H^0 (X, \Omega^{\otimes \ell} \cap \mr{Im}(\nabla))$ by Lemma \ref{Lem778} proved below.
  Thus, 
 if $T_q \mcO p_{2\ell, g,r}^{^\mr{Zzz...}}$ (resp.,  $T_{\chi (q)} \mcO p_{2\ell -1, g,r}^{^\mr{Zzz...}}$) denotes the tangent space of  $ \mcO p_{2\ell, g,r}^{^\mr{Zzz...}}$ (resp., $\mcO p_{2\ell -1, g,r}^{^\mr{Zzz...}}$) at $q$ (resp., $\chi (q)$), then 
 the kernel  of the differential 
 \begin{align}
 d \chi : T_q \mcO p_{2\ell, g,r}^{^\mr{Zzz...}} \rightarrow T_{\chi (q)} \mcO p_{2\ell -1, g,r}^{^\mr{Zzz...}}
 \end{align}
  of $\chi$ at $q$  is isomorphic to  $H^0 (X, \Omega^{\otimes \ell} \cap \mr{Im}(\nabla))$.
 This implies the required equivalence.
\end{proof}
\SSP

The following lemma was applied in the proof of  the above proposition.

\SSP
\ble \label{Lem778}
Keeping the notation in the proof of Proposition \ref{Prop1001},
we obtain the equality
\begin{align}
H^0 (X, \Omega^{\otimes \ell}) \cap \mr{Ker}(\alpha (\nabla))  =
H^0 (X, \Omega^{\otimes \ell} \cap \mr{Im}(\nabla))
\end{align}
of subspaces of $H^0 (X, \Omega^{\otimes \ell})$.
\ele
\begin{proof}
Given an integer $n$ and a sheaf $\mcG$, we define the complex $\mcG [n]$ to be $\mcG$ (considered as a complex concentrated at degree $0$) shifted down by $n$, so that $\mcG [n]^{-n} = \mcG$ and $\mcG [n]^i = 0$ ($i \neq n$). 

Now, denote by $\nabla_\mr{Im}$ the morphism $\mcF_{\varTheta_0} \rightarrow \mr{Im}(\nabla)$ obtained from $\nabla$ by restricting its codomain to $\mr{Im}(\nabla)$.
The natural short exact sequence 
$0 \rightarrow \mcK^\bullet [\nabla_\mr{Im}] \rightarrow \mcK^\bullet [\nabla] \rightarrow \mr{Coker}(\nabla)[-1] \rightarrow 0$
 induces an exact  sequence of $k$-vector spaces
 \begin{align}
 0 \rightarrow \mbH^1 (X, \mcK^\bullet [\nabla_\mr{Im}]) \rightarrow \mbH^1 (X, \mcK^\bullet [\nabla]) \xrightarrow{\alpha (\nabla)} H^0 (X, \mr{Coker}(\nabla)).
 \end{align}
 By using the second arrow, we consider $\mbH^1 (X, \mcK^\bullet [\nabla_\mr{Im}])$ as a subspace of $\mbH^1 (X, \mcK^\bullet [\nabla])$.
In particular,  we have  $\mr{Ker}(\alpha (\nabla)) = \mbH^1 (X, \mcK^\bullet [\nabla_\mr{Im}])$,  which implies 
 \begin{align} \label{Eq803}
 H^0 (X, \Omega^{\otimes \ell}) \cap \mr{Ker}(\alpha (\nabla)) = H^0 (X, \Omega^{\otimes \ell}) \cap \mbH^1 (X, \mcK^\bullet [\nabla_\mr{Im}]).
 \end{align}
 
Next, 
the natural inclusion of short exact sequences 
\begin{align} \label{Erh469}
\vcenter{\xymatrix@C=26pt@R=36pt{
0 \ar[r] & \mr{Im}(\nabla)[-1] \ar[r] \ar[d]^-{\mr{inclusion}} & \mcK^\bullet [\nabla_\mr{Im}] \ar[r] \ar[d]^-{\mr{inclusion}} & \mcF_{\varTheta_0}[0] \ar[r] \ar[d]^-{\mr{id}}_-{\wr} & 0\\
0 \ar[r] & \Omega \otimes \mcF_{\varTheta_0} [-1]\ar[r] & \mcK^\bullet [\nabla] \ar[r] & \mcF_{\varTheta_0}[0] \ar[r] & 0
 }}
\end{align}
induces 
  a morphism of exact sequences
 \begin{align} \label{Erh467}
\vcenter{\xymatrix@C=26pt@R=36pt{
 H^0  (X, \mr{Im}(\nabla)) \ar[r]^-{e_{\mr{Im}, \sharp}} \ar[d] & \mbH^1 (X, \mcK^\bullet [\nabla_\mr{Im}]) \ar[r]^-{e_{\mr{Im}, \flat}} \ar[d]^-{\mr{inclusion}} & H^1 (X,  \mcF_{\varTheta_0}) \ar[d]_-{\wr}^-{\mr{id}} 
 \\
 H^0 (X, \Omega \otimes \mcF_{\varTheta_0}) \ar[r]_-{e_\sharp} & \mbH^1 (X, \mcK^\bullet [\nabla]) \ar[r]_-{e_\flat} &  H^1 (X, \mcF_{\varTheta_0}).
 }}
\end{align}
Since the injection $H^0 (X, \Omega^{\otimes \ell}) \hookrightarrow \mbH^1 (X, \mcK^\bullet [\nabla])$ (cf. \eqref{Eq201}) factors through  $e_\sharp$ (i.e., its image is contained in $\mr{Ker}(e_\flat)$), 
the diagram \eqref{Erh467} shows
\begin{align} \label{Eq800}
H^0 (X, \Omega^{\otimes \ell}) \cap \mbH^1 (X, \mcK^\bullet [\nabla_\mr{Im}]) = H^0 (X, \Omega^{\otimes \ell}) \cap \mr{Im}(e_{\mr{Im}, \sharp}) = H^0 (X, \Omega^{\otimes \ell} \cap \mr{Im}(\nabla)).
\end{align}
 Thus, the assertion follows from \eqref{Eq803} and \eqref{Eq800}.
\end{proof}
\SSP

By the above proposition,  the unramifiedness of $\chi$ amounts  to vanishing  the $k$-vector space $H^0 (X, \Omega^{\otimes \ell} \cap \mr{Im}(\nabla))$.
In what follows, we try to obtain a much better  understanding  of  that space.   
Recall from ~\cite[Theorem D]{Wak8} that
 the dual  $\nabla^\vee$
   of  $\nabla$ (in the sense of ~\cite[Eq.\,(705)]{Wak8})
     is isomorphic to
   the  $(\mr{GL}_{2\ell -1}, \vartheta_0)$-oper $D^{\clubsuit \Rightarrow \diamondsuit}$ arising from a $(2\ell -1, \vartheta_0)$-projective connection $D^\clubsuit$  on $\msX$ (cf. ~\cite[Definition 4.37 and  Eq.\,(529)]{Wak8}).
   (Note that the dual $\vartheta^\blacktriangledown$ of $\vartheta$, in the sense of ~\cite[Eq.\,(701)]{Wak8}, is isomorphic to $\vartheta$ itself.)
  Hence,  after applying  a suitable  gauge transformation,  we may  suppose  that $\nabla^\vee = D^{\clubsuit \Rightarrow \diamondsuit}$ for such a $D^{\clubsuit}$.
  We here abuse notation by writing $D^\clubsuit$ for  the corresponding differential operator $\Omega^{\otimes (-\ell+1)} \rightarrow \Omega^{\otimes \ell}$ via an isomorphism displayed in  ~\cite[Eq.\,(471)]{Wak8}.

\SSP
\ble \label{Lem99}
\begin{itemize}
\item[(i)]
The composite $\mr{Ker} (\nabla) \hookrightarrow \mcF_{\varTheta_0} \twoheadrightarrow \left(\mcF_{\varTheta_0}/ \mcF_{\varTheta_0}^0 = \right) \Omega^{\otimes (-\ell +1)}$  is injective.
Moreover, (when we consider $\mr{Ker}(\nabla)$ as a subsheaf of $\Omega^{\otimes (-\ell +1)}$ by using this injection) we have the equality $\mr{Ker} (\nabla) = \mr{Ker}(D^\clubsuit)$ of subsheaves of $\Omega^{\otimes (-\ell +1)}$.
\item[(ii)]
The equality $\Omega^{\otimes \ell} \cap\mr{Im}(\nabla)
= \mr{Im}(D^\clubsuit)$ between subsheaves of $\Omega^{\otimes \ell} \left(= \Omega \otimes \mcF_{\varTheta_0}^{2\ell -2} \right)$ holds. 
\end{itemize}
\ele
\begin{proof}
First, we shall consider the first assertion of  (i).
Suppose that there exists a nonzero local section $v$ of $\mr{Ker}(\nabla) \cap \mcF_{\varTheta_0}^1$.
Then, we can define  $j_0 := \mr{max} \left\{ j \, | \, v \in \mcF_{\varTheta_0}^j\right\} \left( \geq 1 \right)$.
The image $\overline{v}$ of $v$ via $\mcF_{\varTheta_0}^{j_0} \twoheadrightarrow \mcF_{\varTheta_0}^{j_0}/\mcF_{\varTheta_0}^{j_0+1}$ is nonzero.
Since the morphism $\mr{KS}^{j_0} : \mcF_{\varTheta_0}^{j_0}/\mcF_{\varTheta_0}^{j_0+1} \rightarrow \Omega \otimes (\mcF_{\varTheta_0}^{j_0-1}/\mcF_{\varTheta_0}^{j_0})$ (cf. \eqref{Eq356}) is an isomorphism,
the image 
 $\mr{KS}^{j_0} (\overline{v})$ is nonzero.
 But, by the definition of $\mr{KS}^{j_0}$, it contradicts the assumption that $v \in \mr{Ker}(\nabla)$.
 It follows that $\mr{Ker}(\nabla) \cap \mcF_{\varTheta_0}^1 = 0$, which implies the injectivity of the morphism $\mr{Ker}(\nabla) \rightarrow \Omega^{\otimes (-\ell +1)}$.
 
Next, to prove the second assertion of (i), let us observe that $\nabla$ can be locally described (with respective to a suitable local basis of $\mcF_{\varTheta_0}$ compatible, in a certain sense,  with $\{ \mcF^j_{\varTheta_0} \}_j$) as 
 \begin{align} \label{Eq301}
 \nabla = \partial +  \begin{pmatrix} 0 & q_2 & q_3 &  \cdots & q_{2\ell -2} & q_{2\ell -1} \\ -1 & 0 & 0 & \cdots & 0 & 0 \\
 0 & -1 & 0 & \cdots & 0 & 0 \\
 0 & 0 & -1 & \cdots & 0 & 0 \\
 \vdots & \vdots & \vdots & \cdots & \vdots & \vdots \\
 0 & 0 & 0 & \cdots & -1 & 0
   \end{pmatrix}
 \end{align}
for a local generator $\partial \in \mcT$ (viewed  as a locally defined derivation on $\mcO_X$) and some local functions $q_2, q_3, \cdots, q_{2\ell -1}$.
Then, the assignment 
 $y \cdot (\partial^\vee)^{-\ell +1}\mapsto {^t} (\partial^{2\ell -2}(y), \partial^{2\ell -3}(y), \cdots, \partial (y), y)$ gives a bijective correspondence between the local sections of $\mr{Ker}(D^\clubsuit)$ and the local sections of $\mr{Ker}(\nabla)$.
This completes the proof of assertion (i).

Finally, the desired equality in assertion (ii) is given, with respect to the local description \eqref{Eq301}, by the well-defined correspondence $D^\clubsuit (y\cdot (\partial^\vee)^{-\ell +1}) \mapsto \nabla ({^t} (\partial^{2\ell -2}(y), \partial^{2\ell -3}(y), \cdots, \partial (y), y))$ for each local function $y$.
\end{proof}
\SSP

\bpr \label{Prop100}
Let us consider $\mr{Ker}(\nabla)$ and $\Omega^{\otimes \ell} \cap \mr{Im}(\nabla)$ as 
vector bundles on $X^{(1)}$
via the underlying homeomorphism of $F$.
Then, there exists a canonical short exact  sequence 
\begin{align} \label{Eq200}
0 \rightarrow \mr{Ker} (\nabla) \xrightarrow{} F_{*} (\Omega^{\otimes (-\ell +1)}) \xrightarrow{} 
\Omega^{\otimes \ell} \cap \mr{Im}(\nabla)
\rightarrow 0
\end{align}
of 
vector bundles on $X^{(1)}$.
In particular, the vector bundle $\Omega^{\otimes \ell} \cap \mr{Im}(\nabla)$ has rank $p-2\ell +1$.
 \epr
\begin{proof}
The desired sequence can be obtained, via $F_*(-)$, from the natural short exact sequence
\begin{align}
0 \rightarrow \mr{Ker}(D^\clubsuit) \xrightarrow{\mr{inclusion}} \Omega^{\otimes (-\ell +1)}
\xrightarrow{D^\clubsuit} \mr{Im}(D^\clubsuit) \rightarrow 0
\end{align}
under the identifications $\mr{Ker}(\nabla) = \mr{Ker}(D^\clubsuit)$ and $\Omega^{\otimes \ell} \cap \mr{Im}(\nabla) = \mr{Im}(D^\clubsuit)$ resulting from Lemma \ref{Lem99}, (i) and (ii), respectively.
\end{proof}

\LSP
\subsection{The case of  a $3$-pointed projective line} \label{SS3r2}

Denote by $\mbP$ the projective line over $k$ and by $[0]$, $[1]$, $[\infty]$ the $k$-rational points of $\mbP$ determined by the values $0$, $1$, and $\infty$, respectively.
After ordering the points $[0]$, $[1]$, $[\infty]$, we obtain a unique (up to isomorphism) $3$-pointed stable curve
\begin{align}
\msP := (\mbP/k, \{ [0], [1], [\infty] \})
\end{align} 
of genus $0$ over $k$.
We shall keep the notation in the previous subsection and moreover suppose that $(g, r) = (0, 3)$ and $\msX = \msP$.

\SSP
\ble \label{Lem56}
The vector bundle $\mr{Ker}(\nabla)$ on the Frobenius twist $\mbP^{(1)}$ of $\mbP$ has degree $-3 (\ell -1)$.
\ele
\begin{proof}
For $q \in \{ 0, 1, \infty \}$,
let  $(- a_{q, 1}, \cdots, - a_{q, 2\ell -1})$  (where $0\leq a_{q, 1} \leq \cdots \leq a_{q, 2\ell -1} < p$) be  the exponent of $\nabla$ at the point $[
q]$, in the sense of ~\cite[Definition 8.2]{Wak8}.
According to ~\cite[Proposition 8.4, (ii)]{Wak8},
the integers $a_{q, 1}, \cdots, a_{q, 2\ell -1}$
 are mutually distinct.
 Hence, we may assume that $a_{q,1} < a_{q, 2} < \cdots < a_{q, 2\ell -1}$.
 The exponent of the dual $(\mcF_{\varTheta_0}^\vee, \nabla^\vee)$ at $[q]$
 coincides with $(0, -(p-a_{q, 2\ell -1}), - (p-a_{q, 2\ell -2}), \cdots, - (p-a_{q, 2}))$ (resp., $(-(p-a_{q, 2\ell -1}), - (p-a_{q, 2\ell -2}), \cdots, - (p-a_{q, 1}))$) if $a_{q, 1} = 0$ (resp., $a_{q, 1} \neq 0$).
 But, since $(\mcF_{\varTheta_0}, \nabla)$ is isomorphic to its dual because of the nondegeneracy of the bilinear map $\omega_0 : \mcF_{\varTheta_0}^{\otimes 2} \rightarrow \mcO_X$,
the resp'd situation cannot occur.
It follows that   $a_{q, 1} = 0$ and $a_{q, m} = p-a_{q, 2\ell +1-m}$ for every $m =2, \cdots, 2\ell -1$.
 Hence, we have 
 \begin{align}
 \mr{deg}(\mr{Ker}(\nabla)) & = \frac{1}{p} \cdot \mr{deg} (F^*(\mr{Ker}(\nabla))) \\
 & = \frac{1}{p} \cdot \left(\mr{deg}(\mcF_{\varTheta_0}) - \sum_{q \in \{ 0, 1, \infty \}} \sum_{m=1}^{2\ell -1} a_{q, m}\right) \notag \\
 & = \frac{1}{p} \cdot \left(\sum_{j=0}^{2\ell -2} \mr{det}(\mcF_{\varTheta_0}^j/ \mcF_{\varTheta_0}^{j+1}) - \sum_{q \in \{ 0, 1, \infty \}} \sum_{m=2}^{\ell } (a_{q, m}  + a_{q, 2\ell +1-m}) \right) \notag \\
 & = \frac{1}{p} \cdot \left(\sum_{j=0}^{2\ell -2}  \mr{det}(\Omega^{\otimes (-\ell +1-j)}) - \sum_{q \in \{ 0, 1, \infty \}} \sum_{m=2}^{\ell } p \right) \notag \\
& = \frac{1}{p} \left( 0 - 3 p  (\ell -1)\right) \notag \\
 & = - 3  (\ell -1),
 \end{align}
 where the second equality follows from ~\cite[Lemma 8.1, (ii)]{Wak8}.
This completes the proof of this lemma.
\end{proof}
\SSP

Next, recall the Birkhoff-Grothendieck theorem, asserting that any vector bundle on the projective line    is isomorphic to a direct sum of line bundles.
Hence, there exists a sequence of  integers $w_1, \cdots, w_{p-2\ell +1}$ with $w_1 \leq \cdots \leq w_{p-2\ell +1}$ such that
\begin{align} \label{Eq270}
\Omega^{\otimes \ell} \cap \mr{Im} (\nabla) \cong \bigoplus_{j=1}^{p-2\ell +1} \mcO_{\mbP^{(1)}} (w_j)
\end{align}
(cf. the second assertion of Proposition \ref{Prop100}).

\SSP
\ble \label{Lem123}
Let us keep the above notation.
Then,
we have $w_1 \geq -2$.
\ele
\begin{proof}
For simplicity, we write 
 $\mcA := F^*(F_*(\Omega^{\otimes (-\ell +1)}))$
 and $\mcB := F^* (\Omega^{\otimes \ell} \cap \mr{Im}(\nabla))$, which are  vector bundles on $\mbP$ of rank $p$ and $p-2\ell +1$, respectively.
The pull-back of \eqref{Eq200} define a short exact sequence of $\mcO_{\mbP}$-module
\begin{align}
0 \rightarrow F^*(\mr{Ker}(\nabla)) \xrightarrow{\gamma_\sharp} \mcA \xrightarrow{\gamma_\flat} \mcB
\rightarrow 0.
\end{align}
Let 
 $\{ \mcA^j \}_{j=0}^p$  be the $p$-step decreasing filtration on $\mcA$ constructed as in ~\cite[Eq.\,(1143)]{Wak8}.
 To be precise,  it is defined as follows:
 \begin{align}
 \mcA^0 &:= \mcA; \\
 \mcA^1 & := \mr{Ker} \left(\mcA \xrightarrow{\xi} \Omega^{\otimes (-\ell +1)} \right);  \notag \\
 \mcA^j & := \mr{Ker} \left( \mcA^{j-1} \xrightarrow{\nabla^\mr{can}|_{\mcA^{j-1}}} \Omega \otimes \mcA \xrightarrow{\mr{quotient}} \Omega \otimes (\mcA/\mcA^{j-1})\right) \  (j=2, \cdots, p),
 \end{align}
 where $\xi$ denotes the morphism corresponding to the identity morphism of $F_*(\Omega^{\otimes (-\ell +1)})$ via the adjunction relation ``$F^*(-) \dashv F_*(-)$", and  $\nabla^\mr{can}$ denotes the canonical connection on $\mcA$ determined uniquely by the condition that the local sections in $F^{-1}(F_{*}(\Omega^{\otimes (-\ell +1)}))$  are horizontal (cf. ~\cite[Eq.\,(630)]{Wak8}).
 This 
  gives a filtration $\{ \mcB^j \}_{j=2\ell -1}^{p}$ on $\mcB$ in such a way that $\mcB^j := \mcB$ if $j=2\ell -1$ and  
 $\mcB^j := \gamma_\flat (\mcA^j)$  if $j=2\ell, \cdots, p$.
Similarly to the first part in the proof of ~\cite[Proposition 9.2]{Wak8},
it is verified  that the composite 
\begin{align} \label{Eq289}
F^*(\mr{Ker}(\nabla)) \xrightarrow{\gamma_\sharp} \mcA \twoheadrightarrow \mcA /\mcA^{2\ell -1},
\end{align}
is injective.
It follows that  the composite
\begin{align} \label{Eq399}
\mcA^{2\ell -1} \xrightarrow{\mr{inclusion}} \mcA \xrightarrow{\gamma_\flat} \mcB
\end{align}
is injective and moreover bijective  over the generic point.
This composite induces a nonzero injection $\mcA^j/\mcA^{j+1} \hookrightarrow \mcB^j/\mcB^{j+1}$ ($j=2\ell -1, \cdots, p-1$).
Hence, 
 for each $j=2\ell, \cdots, p-1$, we have 
 \begin{align} \label{Eq290}
\mr{det}(\mcB^j/\mcB^{j+1}) \geq \mr{deg}(\mcA^j /\mcA^{j+1}) =  \mr{deg}(\Omega^{\otimes (-\ell +1)} \otimes \Omega_{\mbP/k}^{\otimes j}) = -\ell +1 - 2j,
 \end{align}
  where $\Omega_{\mbP/k}$ denotes the sheaf of non-logarithmic $1$-forms on $\mbP$ over $k$, and  the first ``$=$" follows from 
  ~\cite[Proposition 9.1]{Wak8}. 
  Note that  the ``$\geq$" in this sequence becomes an equality ``$=$" when $j > 2\ell -1$. 
  In particular, the following equality holds:
  \begin{align} \label{Eq276}
  \mr{deg}(\mcB^{p-1}) = -\ell -2p+3.
  \end{align}

Next, we shall write
$\xi : \mcB \twoheadrightarrow \mcO_{\mbP^{(1)}} (p \cdot w_1)$ 
for
the projection onto the $1$-st factor with respect to 
the 
decomposition  $\mcB \cong \bigoplus_{j=1}^{p-2\ell +1} \mcO_{\mbP}(p\cdot w_j)$ obtained as the pull-back of  \eqref{Eq270}.
Also,
write
\begin{align}
j_0 := \mr{max}\left\{ j \, | \, 2\ell -1 \leq j \leq p-1, \ \xi (\mcB^j) \neq 0\right\}.
\end{align}
Then, 
$\xi$ induces a {\it nonzero} morphism between line bundles
$\overline{\xi} : \mcB^{j_0}/\mcB^{j_0 +1} \rightarrow \mcO_{\mbP^{(1)}} (p \cdot w_1)$.
In particular, 
$\overline{\xi}$ is injective, so we have
\begin{align} \label{Eq285}
p \cdot w_1 &= \mr{deg}(\mcO_{\mbP^{(1)}}(p\cdot w_1)) \geq \mr{deg}(\mcB^{j_0}/\mcB^{j_0+1}) \geq \mr{deg}(\mcB^{p-1}),
\end{align}
where the last inequality
 follows from
the sequence
 \begin{align} \label{Eq280}
\mr{deg}(\mcB^{p-1}) < \mr{deg} (\mcB^{p-2}/\mcB^{p-1}) < \mr{deg}(\mcB^{p-3}/\mcB^{p-2}) < \cdots < \mr{deg} (\mcB/\mcB^{2\ell})
\end{align}
induced from \eqref{Eq290}.
By combining  \eqref{Eq276}
and \eqref{Eq285}, we obtain 
$w_1 \geq  \frac{-\ell- 2 p+3}{p}$ $\left(= -2 - \frac{\ell -3}{p} \right)$.
Thus, 
the desired inequality follows from this inequality together with the assumption that $w_1 \in \mbZ$  and $p > 2 (2\ell -1)$.
\end{proof}
\SSP

By applying the above lemma, we can prove the following assertion.

\SSP
\bpr \label{Prop30}
Under the assumption that $\msX = \msP$,
let $\nabla$ be as introduced at the beginning of \S\,\ref{SS32}.
Suppose that $\ell > 3$ and that the dual $(\mr{GL}_{2\ell -1}, \vartheta_0)$-oper   $\nabla^\vee$ of $\nabla$ coincides with $D^{\clubsuit \Rightarrow \diamondsuit}$ for some $(2\ell -1, \vartheta_0)$-projective connection $D^\clubsuit$ on $\msP$. 
Then, the sheaf $\Omega^{\otimes \ell} \cap \mr{Im}(\nabla)$, considered as a vector bundle on $\mbP^{(1)}$, satisfies
\begin{align} \label{Eq456}
\Omega^{\otimes \ell} \cap \mr{Im}(\nabla) \cong \mcO_{\mbP^{(1)}} (-1)^{\oplus (p-2\ell +1)}.
\end{align}
In particular, 
the equality  $H^0 (\mbP^1, \Omega^{\otimes \ell} \cap \mr{Im}(\nabla)) = 0$ holds.
\epr
\begin{proof}
 It is well-known that, for each integer $m$,   the direct image  $F_{*}(\mcO_{\mbP} (m))$ of the line bundle $\mcO_{\mbP}(m)$ is isomorphic to $\mcO_{\mbP^{(1)}} (m) \oplus \mcO_{\mbP^{(1)}}(-1)^{\oplus (p-1)}$.
  In particular, 
  we have 
  \begin{align} \label{Eq457}
  F_{*}(\Omega^{\otimes (-\ell +1)}) \cong \mcO_{\mbP^{(1)}} (-\ell +1) \oplus \mcO_{\mbP^{(1)}} (-1)^{\oplus (p-1)}.
  \end{align}
Let us fix a decomposition \eqref{Eq270}, and
 observe the following sequence of equalities:
 \begin{align} \label{Eq489}
 \sum_{j=1}^{p-2\ell +1} w_j &= \mr{deg}(\Omega^{\otimes \ell} \cap \mr{Im}(\nabla))  \\
 &= \mr{deg}(F_*(\Omega^{\otimes (-\ell +1)})) - \mr{deg}(\mr{Ker}(\nabla)) \notag \\
 & = \left(\mr{deg}(\mcO_{\mbP^{(1)}}(-\ell +1)) + (p-1) \cdot \mr{deg}(\mcO_{\mbP^{(1)}}(-1)) \right) -\mr{deg}(\mr{Ker}(\nabla)) \notag \\
 & = (-\ell -p +2) - (-3 (\ell -1)) \notag \\
 & = 2\ell -1  -p,
 \end{align}
 where the second equality follows from Proposition \ref{Prop100},
  the third equality follows from
  \eqref{Eq457}, and the fourth equality follows from
   Lemma \ref{Lem56}.
On the other hand,
the surjection $F_*(\Omega^{\otimes (-\ell +1)}) \twoheadrightarrow \Omega^{\otimes \ell} \cap \mr{Im}(\nabla)$ in \eqref{Eq200} induces, via  \eqref{Eq456} and \eqref{Eq457},
a surjection $\mcO_{\mbP^{(1)}} (-\ell +1) \oplus \mcO_{\mbP^{(1)}} (-1)^{\oplus (p-1)}\twoheadrightarrow \bigoplus_{j=1}^{p-2\ell +1} \mcO_{\mbP^{(1)}} (w_j)$.
This implies that (since $-\ell +1 < -2 \leq w_1$  by Lemma \ref{Lem123} and our  assumption)
$w_1$ must be greater than $-2$.
 Hence, it follows from \eqref{Eq489} that $(w_1, \cdots, w_{p-2\ell +1}) = (-1, \cdots, -1)$.
 This completes the proof of this proposition.
 \end{proof}
\SSP

\bco \label{Cor34}
(Recall that $\overline{\mcM}_{0, 3}$ is isomorphic to $\mr{Spec}(k)$.)
Suppose that $\frac{p+2}{4} > \ell > 3$.
Then, 
the $k$-scheme  $\mcO p^{^\mr{Zzz...}}_{2\ell, 0, 3}$ is isomorphic to the disjoint union of finitely many copies of $\mr{Spec}(k)$.
\eco
\begin{proof}
By  ~\cite[\S\,4.6.4]{Wak8},
there exists a $(2\ell-1)$-theta characteristic  $\vartheta := (\varTheta, \nabla_\vartheta)$ such that $\nabla_\vartheta$ has vanishing $p$-curvature.
Hence, the dormant $\mfs \mfo_{2\ell}$-oper classified by a point of $\mcO p^{^\mr{Zzz...}}_{2\ell, 0, 3}$ arises from a dormant $(\mr{GO}_{2\ell}^0, \vartheta)$-oper (cf. Proposition  \ref{Prop15}).
By  Propositions \ref{Prop1001} and  \ref{Prop30},
the morphism 
$\chi : \mcO p^{^\mr{Zzz...}}_{\mfs \mfo_{2\ell}, 0, 3} \rightarrow \mcO p^{^\mr{Zzz...}}_{\mfs \mfo_{2\ell -1}, 0, 3}$ is verified to be  unramified.
On the other hand, since we have assumed the inequality $p > 2 (2\ell -1)$,  it follows from   ~\cite[Theorem G]{Wak8}  that  $\mcO p^{^\mr{Zzz...}}_{\mfs \mfo_{2\ell -1}, 0, 3}$ is \'{e}tale over $k$, i.e., isomorphic to the  disjoint union of finitely many copies of $\mr{Spec}(k)$.
This implies the \'{e}taleness of  $\mcO p^{^\mr{Zzz...}}_{\mfs \mfo_{2\ell}, 0, 3}$ over $k$, which 
 completes the  proof of this assertion.
\end{proof}

\LSP
\subsection{The generic \'{e}taleness of the moduli space} \label{SS3rr2}

Applying  Corollary \ref{Cor34} and a result in ~\cite{Wak8},
we obtain the following Theorem \ref{Thm1}.
To describe it, recall that 
a pointed stable curve over $k$ is called {\bf totally degenerate}  if it is obtained by gluing together finitely many copies of $\msP$ along their marked points (cf. ~\cite[Definition 7.15]{Wak8} for its precise definition).

\SSP
\bt[cf. Theorem \ref{ThA}] \label{Thm1}
Suppose that $\frac{p+2}{4} > \ell > 3$.
Then,
the stack  $\mcO p_{2 \ell, g, r}^{^\mr{Zzz...}}$ 
is \'{e}tale over the points of 
$\overline{\mcM}_{g, r}$ classifying totally degenerate curves.
In particular, (because of the irreducibility of $\overline{\mcM}_{g, r}$ and the finiteness of $\mcO p_{2 \ell, g, r}^{^\mr{Zzz...}}/\overline{\mcM}_{g, r}$)  $\mcO p_{2 \ell, g, r}^{^\mr{Zzz...}}$ is generically \'{e}tale over $\overline{\mcM}_{g, r}$, i.e., any irreducible component that dominates $\overline{\mcM}_{g, r}$ admits a dense open substack which   is  \'{e}tale over $\overline{\mcM}_{g, r}$.
\et
\begin{proof}
The assertion follows from Corollary \ref{Cor34} and  ~\cite[Proposition 7.19]{Wak8}.
\end{proof}
\SSP

We conclude the present paper by describing  a factorization property of the generic degree $\mr{deg}(\Pi_{2\ell, g, r})$ of the morphism $\Pi_{2\ell, g, r}$
  in accordance with the data of radii (cf. ~\cite[Chap.\,7]{Wak8} for the previous study of related topics).

Let us suppose that $\frac{p+2}{4} > \ell > 3$. 
Denote by $\mfc$ the GIT quotient of  $\mfs \mfo_{2\ell}$ by the adjoint action of
$\mr{PGO}_{2\ell}^0$. 
Since $\mfc$ can be defined over $\mbF_p$, it makes sense to speak of the set of $\mbF_p$-rational points of $\mfc$, denoted by $\mfc (\mbF_p)$.

Given an $r$-tuple $\rho := (\rho_i)_{i=1}^r \in \mfc (\mbF_p)^{\times r} \left(= \mfc (\mbF_p) \times \cdots \times \mfc (\mbF_p)\right)$ and a dormant $\mfs \mfo_{2\ell}$-oper $\msE^\spadesuit_+ := (\mcE_B, \nabla)$ on an $r$-pointed stable curve $\msX$,
we say that  $\msE^\spadesuit_+$ is {\bf of radii $\rho$}
if, for every $i=1, \cdots, r$,  
the residue of $\nabla$ (as an element of  $\mfs \mfo_{2\ell}$)  at the $i$-th marked point of $\msX$  is mapped to $\rho_i$ via the quotient $\mfs \mfo_{2\ell} \twoheadrightarrow \mfc$  (cf. ~\cite[Definition 2.32]{Wak8}).

We denote by
\begin{align}
\mcO p^{^\mr{Zzz...}}_{2\ell, \rho, g, r}
\end{align}
the (possibly empty) closed and open substack of $\mcO p^{^\mr{Zzz...}}_{2\ell, g, r}$ classifying dormant $\mfs \mfo_{2\ell}$-opers {\it of radii $\rho$}, which admits the projection
\begin{align}
\Pi_{2\ell, \rho, g, r} : \mcO p^{^\mr{Zzz...}}_{2\ell, \rho, g, r} \rightarrow \overline{\mcM}_{g, r}.
\end{align}
By Theorem \ref{Thm1}, the stack $\mcO p^{^\mr{Zzz...}}_{2\ell, \rho, g, r}$ is (finite and) generically \'{e}tale, so it makes sense to speak of  the generic degree
$\mr{deg}(\Pi_{2\ell, \rho, g, r})$ of $\Pi_{2\ell, \rho, g, r}$.
Since  $\mcO p^{^\mr{Zzz...}}_{2\ell, g, r}$ decomposes into the direct sum
$\mcO p^{^\mr{Zzz...}}_{2\ell, g, r} = \coprod_{\rho \in \mfc (\mbF_p)^{\times r}}\mcO p^{^\mr{Zzz...}}_{2\ell, \rho, g, r}$ (cf. ~\cite[Theorem C]{Wak8}),
the equality $\mr{deg}(\Pi_{2 \ell, g, r}) = \sum_{\rho \in \mfc (\mbF_p)^{\times r}} \mr{deg}(\Pi_{2\ell, \rho, g, r})$ holds.

If $\star : \mbG_m \times \mfc \rightarrow \mfc$ denotes the $\mbG_m$-action on $\mfc$ coming from homotheties on $\mfs \mfo_{2\ell}$ (cf. ~\cite[Eq.\,(264)]{Wak8}),
then we have $\lambda = (-1)\star \lambda$  for any $\lambda \in \mfc (\mbF_p)$. 
This fact together with   Corollary \ref{Cor34} shows that $\mfs \mfo_{2\ell}$ satisfies 
both the conditions $(*)$ and $(**)$ described at the beginning of ~\cite[\S\,7.3.5]{Wak8}.
Thus, according to ~\cite[Proposition 7.33]{Wak8} and the discussion in 
~\cite[\S\,7.4]{Wak8},
one can obtain  the {\it pseudo-fusion ring}  for dormant $\mfs \mfo_{2\ell}$-opers $\Fus$, in the sense of ~\cite[Definition 7.34]{Wak8}.
To be precise, $\Fus$ is defined as the unitization of the free abelian group $\mbZ^{\mfc (\mbF_p)}$ with basis $\mfc (\mbF_p)$ equipped with the multiplication $\ast : \mbZ^{\mfc (\mbF_p)} \times \mbZ^{\mfc (\mbF_p)} \rightarrow \mbZ^{\mfc (\mbF_p)}$ given by
\begin{align}
\alpha \ast \beta = \sum_{\lambda \in \mfc (\mbF_p)} \mr{deg}(\Pi_{2\ell, (\alpha, \beta, \lambda), 0, 3}) \cdot \lambda.
\end{align}
The explicit understanding of its ring structure allows us to perform a computation of
 the values $\mr{deg}(\Pi_{2\ell, \rho, g, r})$.
In fact, we obtain  the following  assertion.

\SSP
\bt[cf.  Theorem \ref{ThC}] \label{Thm15}
Write $\mfS$ for the set of ring homomorphims $\Fus \rightarrow \mbC$ and write $\mr{Cas} := \sum_{\lambda \in \mfc (\mbF_p)} \lambda \ast \lambda \left(\in \Fus \right)$.
Then, for each $\rho := (\rho_i)_{i=1}^r \in \mfc (\mbF_p)^{\times r}$,
the following equality holds:
\begin{align}
\mr{deg}(\Pi_{2\ell, \rho, g, r}) = \sum_{\chi \in \mfS} \chi (\mr{Cas})^{g-1} \cdot \prod_{i=1}^r \chi (\rho_i).
\end{align}
In particular, if $r = 0$ (which implies $g > 1$), then this equality reads
\begin{align}
\mr{deg}(\Pi_{2\ell, \emptyset, g, 0}) = \sum_{\chi \in \mfS} \chi (\mr{Cas})^{g-1}.
\end{align}
\et
\begin{proof}
The assertion follows from ~\cite[Theorem 7.36, (ii)]{Wak8}.
\end{proof}

\LSP
\subsection*{Acknowledgements} 
We are grateful for the many constructive conversations we had with  the {\it moduli space of dormant $\mfs \mfo_{2 \ell}$-opers}, who lives in the world of mathematics!
Our work was partially supported by Grant-in-Aid for Scientific Research (KAKENHI No. 21K13770).

\vspace{10mm}

\end{document}